\theoremstyle{plain}
  \newtheorem{thm}{Theorem}[section]
  \newtheorem{lem}[thm]{Lemma}
  \newtheorem{claim}[thm]{Claim}
  \newtheorem{conj}[thm]{Conjecture}
\theoremstyle{definition}
  \newtheorem{dfn}[thm]{Definition}
  \newtheorem{ex}[thm]{Example}
\theoremstyle{remark}
  \newtheorem{rem}[thm]{Remark}
\newcounter{yon}
\numberwithin{equation}{section}
	\newcommand{\dist}{\mathop{\mathit{d}} \nolimits}
	\newcommand{\tra}{\mathop{\mathrm{Tra}} \nolimits}
	\newcommand{\mushi}{\mathop{\mathrm{def}} \nolimits}
	\newcommand{\pr}{\mathop{\mathrm{proj}} \nolimits}
	\newcommand{\di}{\mathop{\mathrm{di}}   \nolimits}
	\newcommand{\sep}{\mathop{\mathrm{Sep}} \nolimits}
	\newcommand{\ler}{\mathop{\mathrm{LeRad}} \nolimits}
	\newcommand{\supp}{\mathop{\mathrm{Supp}}    \nolimits}
      \newcommand{\lm}{\mathop{\mathit{lm}} \nolimits}
    \newcommand{\cd}{\mathop{\mathrm{CD}}                  \nolimits}
    \newcommand{\ent}{\mathop{\mathrm{Ent}}
	\nolimits}
    \newcommand{\e}{\mathop{\varepsilon}       \nolimits}
    \DeclareMathOperator{\divv}{div}
\title[Estimates of eigenvalues of Laplacian]{Estimates of Eigenvalues
of Laplacian by a reduced number of subsets}
\author{Kei Funano}
\thanks{\hspace{-0.4cm}Supported by a Grant-in-Aid for
Scientific Research from the Japan Society for the Promotion of
Science.}
\address{Department of Mathematics, Faculty of Science, Kyoto University, Kyoto 606-8502, JAPAN}
\email{kfunano@math.kyoto-u.ac.jp}
\thanks{\hspace{-0.4cm}2000 Mathematics Subject Classification number: 58C40}
\date{}
\begin{document}
\maketitle
\begin{abstract}Chung-Grigor'yan-Yau's inequality describes upper bounds of eigenvalues of Laplacian
in terms of subsets (``input'') and their volumes.  In this paper we
 will show that we can reduce ginputh in Chung-Grigor'yan-Yau's inequality
 in the setting of Alexandrov spaces satisfying $\cd(0,\infty)$. We will
 also discuss a related conjecture for some universal inequality among eigenvalues of Laplacian.
 \end{abstract}
\section{Introduction}
The study of eigenvalues of Laplacian is now a classical but important subject in
mathematics. It is closely related with geometry of underlying spaces such as curvature,
volume, diameter, closed geodesics, and etc, see \cite{chavel}. In this
paper we prove the following. For two subsets $A,B$ in a metric space
$(X,\dist)$ we denote
\begin{align*}
\dist(A,B):=\inf \{ \dist(a,b) \mid a\in A,b\in B\}.
 \end{align*}
\begin{thm}\label{Mthm}There exists a universal and numerical constant $c>0$
 satisfying the following property.
 Let $(X,\mu)$ be an weighted compact finite-dimensional Alexandrov
 space satisfying $\cd (0,\infty)$ and $\mu(X)=1$. For any $l+1$ Borel subsets
 $A_0,A_1, \cdots, A_l$ with $l\leq k$, the $k$th eigenvalue $\lambda_k(X,\mu)$ of the
 weighted Laplacian has the estimate
\begin{align}
 \lambda_k(X,\mu)\leq \frac{c^{k-l+1}}{\min_{i\neq j}\dist(A_i,A_j)^2}\Big(\max_{i\neq j} \log \frac{1}{\mu(A_i)\mu(A_j)}\Big)^2.
 \end{align}
 \end{thm}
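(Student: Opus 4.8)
The plan is to combine two ingredients. The first is the Chung--Grigor'yan--Yau inequality, which converts the $l+1$ prescribed subsets into an upper bound for $\lambda_l(X,\mu)$. The second is a universal comparison of consecutive eigenvalues valid under $\cd(0,\infty)$; iterating it $k-l$ times lifts the bound from $\lambda_l$ to $\lambda_k$, and this is exactly what produces the factor $c^{\,k-l+1}$.

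To set up, note that there is nothing to prove unless $\mu(A_i)>0$ for every $i$ and $\dist(A_i,A_j)>0$ for every $i\ne j$, since otherwise the right-hand side is $+\infty$; moreover $\mu(A_i)+\mu(A_j)\le\mu(X)=1$ forces $\mu(A_i)\mu(A_j)\le 1/4$, so $\log\frac1{\mu(A_i)\mu(A_j)}\ge\log 4>0$ and any absolute constant occurring inside a logarithm below may be absorbed into the final constant. Recall now the Chung--Grigor'yan--Yau inequality: there is an absolute constant $c_1$ such that, for any $m+1$ mutually disjoint Borel sets $B_0,\dots,B_m$ of positive measure,
\[
\lambda_m(X,\mu)\ \le\ \frac{c_1}{\min_{i\ne j}\dist(B_i,B_j)^2}\Big(\max_{i\ne j}\log\frac{1}{\mu(B_i)\mu(B_j)}\Big)^2 .
\]
This was established by Chung--Grigor'yan--Yau for Riemannian manifolds and for graphs, and its only analytic inputs are the discreteness of the spectrum and the Davies--Gaffney off-diagonal decay $\langle P_t\mathbf{1}_{B_i},\mathbf{1}_{B_j}\rangle_{L^2(\mu)}\le\sqrt{\mu(B_i)\mu(B_j)}\;e^{-\dist(B_i,B_j)^2/(4t)}$ of the heat semigroup $(P_t)_{t>0}$. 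Both hold on a weighted compact finite-dimensional Alexandrov space satisfying $\cd(0,\infty)$ with $\mu(X)=1$: there the canonical Dirichlet form is strongly local and compatible with the metric, and the associated Laplacian has discrete spectrum $0=\lambda_0<\lambda_1\le\lambda_2\le\cdots\to\infty$. Taking $m=l$ and $B_i=A_i$ yields
\[
\lambda_l(X,\mu)\ \le\ \frac{c_1}{\min_{i\ne j}\dist(A_i,A_j)^2}\Big(\max_{i\ne j}\log\frac{1}{\mu(A_i)\mu(A_j)}\Big)^2 .
\]

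The second ingredient is the eigenvalue ratio estimate: there is a dimension-free universal constant $c_0\ge 1$ such that $\lambda_{m+1}(X,\mu)\le c_0\,\lambda_m(X,\mu)$ for every $m\ge 1$ and every weighted compact finite-dimensional Alexandrov space with $\cd(0,\infty)$ and $\mu(X)=1$. This is known from the body of results relating concentration of measure to lower Ricci curvature bounds, in combination with the one-dimensional localization of $\cd(0,\infty)$ spaces, whose reduced factors carry log-concave densities on which consecutive eigenvalues are comparable up to a universal factor. Iterating from $l$ up to $k$ gives $\lambda_k(X,\mu)\le c_0^{\,k-l}\,\lambda_l(X,\mu)$. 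Putting $c:=\max\{c_0,c_1\}$, so that $c_0^{\,k-l}c_1\le c^{\,k-l+1}$ (using $l\le k$), and substituting the previous display, we obtain
\[
\lambda_k(X,\mu)\ \le\ \frac{c^{\,k-l+1}}{\min_{i\ne j}\dist(A_i,A_j)^2}\Big(\max_{i\ne j}\log\frac{1}{\mu(A_i)\mu(A_j)}\Big)^2 ,
\]
which is the assertion of the theorem.

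The reduction itself is therefore short; the real content, and the step I expect to be the main obstacle, is the universal eigenvalue ratio estimate under $\cd(0,\infty)$ --- both in extracting a genuinely dimension-free constant and in transporting the concentration and localization techniques from smooth weighted manifolds to finite-dimensional Alexandrov spaces. A lesser technical point is to confirm that the Davies--Gaffney estimate, hence the Chung--Grigor'yan--Yau mechanism, carries over to the Alexandrov setting, which it does by virtue of the good structure of the canonical heat flow there. Finally, the exponential factor $c^{\,k-l+1}$ is exactly what iterating $\lambda_{m+1}\le c_0\lambda_m$ produces, so any strengthening of this eigenvalue comparison --- such as the one conjectured in the next section --- would sharpen the bound accordingly.
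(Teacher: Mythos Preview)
Your reduction is logically sound, but your second ingredient is precisely the open problem that this paper is working \emph{toward}, not an available tool. The assertion that there is a universal constant $c_0$ with $\lambda_{m+1}(X,\mu)\le c_0\,\lambda_m(X,\mu)$ for every $m\ge 1$ and every compact weighted finite-dimensional Alexandrov space satisfying $\cd(0,\infty)$ is Conjecture~\ref{conjconj} of the paper. The paper explicitly notes that only the case $m=1$ is known (obtained by combining Theorem~\ref{Mthm} itself with the Gozlan--Roberto--Samson result), and that the general case would follow from a higher-order analogue of E.~Milman's or GRS's theorem which is not established. Your appeal to one-dimensional localization does not close this gap: while consecutive eigenvalues on a single log-concave needle are indeed universally comparable, localization does not transfer the ratio $\lambda_{m+1}/\lambda_m$ back to the ambient space for $m\ge 2$. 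Nor can Liu's inequality $\lambda_k\le ck^2\lambda_1$ substitute, since it yields only $\lambda_k\le ck^2\lambda_l$, and $ck^2$ cannot be bounded by $c'^{\,k-l+1}$ uniformly in $k$ when $k-l$ is small. In short, you have assumed a statement at least as strong as the theorem you are proving; in the paper's logical architecture the implication runs the other way.

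The paper's route avoids any eigenvalue-ratio input entirely. Instead of descending from $\lambda_k$ to $\lambda_l$ through the spectrum, it works with the \emph{separation distance}: Chung--Grigor'yan--Yau is read as a bound
\[
\sep\big((X,\mu);\underbrace{\kappa,\dots,\kappa}_{k+1}\big)\ \le\ \frac{C}{\sqrt{\lambda_k(X,\mu)}}\,\log\frac{1}{\kappa},
\]
and the heart of the argument (Theorem~\ref{3t2}) is a self-improvement showing that under $\cd(0,\infty)$ such a $(k{+}1)$-separation inequality implies the corresponding $k$-separation inequality with the same $D$ up to a universal multiplicative constant. This step is proved directly by an optimal-transport argument: from $k$ well-separated sets one manufactures two sets $B_0\subset B_1$ with a controlled coupling, pushes along the Wasserstein geodesic, and obtains a contradiction from the entropy convexity (\ref{3s1}). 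Iterating this reduction $k-l$ times and unwinding the separation inequality for $l{+}1$ sets gives the stated bound with the factor $c^{\,k-l+1}$. Thus what you packaged as a black-box eigenvalue ratio is in fact the substantive content of the paper, proved at the level of separation rather than spectrum.
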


  Here an \emph{Alexandrov space} is a complete geodesic metric space
  with local 'sectional curvature' bounds introduced by A.~D.~Alexandrov
  in terms of comparison properties of geodesic triangles. The condition
  $\cd(0,\infty)$ stands for the space $(X,\mu)$ has nonnegative 'Ricci
  curvature' (see Section \ref{prfmain}).

 This inequality was first proved by Gromov and V.~Milman in the
 case where $k=l=1$ without curvature assumption (\cite{milgro}). It
 is equivalent to an exponential concentration inequality (see Lemma \ref{2.2l3}). Chung, Grigor'yan, and Yau generalized their result to the case where
$k=l$ (\cite{cgy1,cgy2}). Although Chung-Grigor'yan-Yau's setting was
 for manifolds, their proof also works for
 Alexandrov spaces without any changes. See Section \ref{weight} for details. The crucial point of the above theorem is that one can reduce
 the number of subsets (``input'') in a dimension-free way under
 assuming $\cd (0,\infty)$. 

\begin{rem}\upshape
 (1) The purpose in the previous paper \cite{funa2} was to use Theorem \ref{Mthm} to understand the relationships between the 
eigenvalues $\lambda_k (M,\mu)$ of the weighted Laplacian for different $k$, where $(M,\mu)$ is a
compact weighted Riemannian manifold $(M,\mu)$ having nonnegative Bakry-\'Emery Ricci curvature. Precisely, the
author obtained the universal inequalities $\lambda_k(M,\mu)\leq c^k
\lambda_1(M,\mu)$ among the
eigenvalues. After the paper \cite{funa2} was written Liu proved the
sharp universal inequality $\lambda_k(X,\mu)\leq c k^2 \lambda_1(X,\mu)$ for weighted compact
finite-dimensional Alexandrov space $(X,\mu)$ satisfying
$\cd(0,\infty)$ (\cite{Liu}). He pointed out that the so called the improved
Cheeger inequality holds via the same proof for graph setting in
\cite{kllot} and combining with the Buser-Ledoux inequality implies the
above sharp inequality.

 (2) In the previous paper \cite{funa2} the author proved Theorem \ref{Mthm}
 only for compact weighted manifolds. The proof in \cite{funa2}
 implicitly uses the smooth structure of the underlying spaces (see
 Section \ref{prfmain} for details). Theorem
 \ref{Mthm} in the present paper avoids the issue and is stronger than
 the one in \cite{funa2}. In fact, since our Alexandrov spaces are metric
 spaces with ``local'' sectional curvature bounds, our setting includes
 any compact weighted Riemannian manifolds satisfying $\cd (0,\infty)$. The
 author will not publish the paper \cite{funa2} from any journal.

 (3) One can extend Theorem \ref{Mthm} for spaces with lower negative Ricci
 curvature bounds $K$ if we add some restriction in diameter according to the lower
 bound $K$ and if we allow that the constant $c$ depends on diameter and
 $K$. See the proof of Theorem \ref{Mthm}. In Theorem \ref{Mthm} the lower bounds of Ricci
 curvature is necessary as was remarked in \cite{funa6}. Some 'dumbbell
 space' gives a counterexample, see \cite[Example 4.9]{funa6} for details.

 (4) Theorem \ref{Mthm} is true also in the case where $X$ has non-empty
 boundary. In that case we implicitly assume the Neumann
   boundary condition.
 \end{rem}

  In the proof of Theorem \ref{Mthm} we will
  work on the notion of \emph{'separation'}, which is regarded as a
  generalization of the concentration of measure phenomenon (see
  Subsection \ref{sepsec}). It tells the information whether or not
  there exists a pair which are not separated in some sense among any $k+1$-tuple subsets with a
  fixed volume. The idea of the proof of Theorem \ref{Mthm}
  will be discussed in Section \ref{prfmain} in details. 

\section{Preliminaries}\label{preliminaries}
We review some basics needed in this paper.
\subsection{L\'evy radius}


Let $X$ be an \emph{mm-space}, i.e.,
a complete separable metric space with a Borel probability measure $\mu_X$.



Let $f:X\to \mathbb{R}$ a Borel measurable
  function. A real number $m_f$ is called a \emph{median} of $f$
  if it satisfies that
  \begin{align*}
    \mu_X(\{ x\in X \mid f(x)\geq m_f\})\geq 1/2 \quad \text{and}\quad
    \mu_X(\{x\in X \mid f(x)\leq m_f\})\geq 1/2.
  \end{align*}The set of all median of the
  function $f$ is a bounded closed interval $[\,a_f,b_f\, ]$.
  We define $\lm(f;\mu_X):=(a_f+b_f)/2$.

 \begin{dfn}[L\'{e}vy radius]\label{ptd1}\upshape For $\kappa >0$, we define the \emph{L\'{e}vy
	radius} $\ler (X;- \kappa)$ of an mm-space $X$ as the infimum of $\rho >0$ such that
	every $1$-Lipschitz function $f:X\to \mathbb{R}$ satisfies that
    \begin{align*}
	\mu_X (\{ x\in X \mid |f(x)-\lm(f;\mu_X)|\geq \rho      \})\leq \kappa.
    \end{align*}
\end{dfn}

Refer to \cite{gromov}, \cite{ledoux} for the background of L\'evy radius. 

\subsection{Separation distance}\label{sepsec}We define the separation distance which
 plays an important role in the proof of Theorem \ref{Mthm}. The separation distance was introduced by Gromov in \cite{gromov}. 

\begin{dfn}[Separation distance]\label{2.2d1}
  For any $\kappa_0,\kappa_1,\cdots,\kappa_k\geq 0$ with $k\geq 1$,
  we define the ($k$-)\emph{separation distance}
  $\sep(X;\kappa_0,\kappa_1, \cdots, \kappa_k)$ of $X$
  as the supremum of $\min_{i\neq j}\dist(A_i,A_j)$,
  where $A_0,A_1, \cdots, A_k$ are
  any Borel subsets of $X$ satisfying that $\mu_X(A_i)\geq \kappa_i$ for
  all $i=0,1,\cdots,k$.
\end{dfn}It is immediate from the definition that if $\kappa_i\geq
  \tilde{\kappa_i}$ for each $i=0,1,\cdots,k$, then
  \begin{align*}
   \sep(X;\kappa_0,\kappa_1,\cdots,\kappa_k)\leq \sep(X;\tilde{\kappa}_0,\tilde{\kappa}_1,\cdots,\tilde{\kappa}_k).
   \end{align*}Note that if the support of $\mu_X$ is connected, then
 \begin{align*}\sep(X;\kappa_0, \kappa_1, \cdots, \kappa_k)=0
  \end{align*}for any $\kappa_0,\kappa_1,\cdots, \kappa_k>0$ such that
  $\sum_{i=0}^k \kappa_i>1$.

    We denote the closed $r$-neighborhood of a subset $A$ in
    a metric space by $C_r(A)$.
   \begin{lem}\label{2.2l2}Let $X$ be an mm-space and $k\geq 1$. Put $r:= \sep (X, \kappa_0,\kappa_1,\cdots,
    \kappa_{k})$. Assume that $k$ Borel subsets $A_0,A_1,\cdots,
    A_{k-1}$ of $X$
    satisfy $\mu_X(A_i)\geq \kappa_i$ for every $i=0,1,\cdots, k-1$ and
    $\dist(A_i,A_j)>r$ for every $i\neq j$. Then
    we have
    \begin{align*}
     \mu_X\Big(\bigcup_{i=0}^{k-1} C_{r} (A_i) \Big)\geq 1-\kappa_k.
     \end{align*}
    \begin{proof}
     Suppose that for some $\e_0>0$,
     \begin{align*}
      \mu_X\Big(\bigcup_{i=0}^{k-1} C_{r+\e_0} (A_i) \Big)\leq 1-\kappa_k.
      \end{align*}Putting $A_k:=X\setminus \bigcup_{i=0}^{k-1}
     C_{r+\e_0} (A_i)$ we have $\mu_X(A_k)\geq \kappa_k$ and $\dist(A_k,A_i)\geq r+\e_0$ for any
     $i=0,1,\cdots,k-1$. Thus we get
     \begin{align*}
      r<\min_{i\neq j} \dist(A_i,A_j)\leq
      \sep(X;\kappa_0,\kappa_1,\cdots, \kappa_k)=r,
      \end{align*}which is a contradiction. Hence $
     \mu_X(\bigcup_{i=0}^{k-1} C_{r+\e} (A_i) )> 1-\kappa_k$
     for any $\e>0$. Letting $\e\to 0$ we obtain the conclusion.
     \end{proof}
    \end{lem}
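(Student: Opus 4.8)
The plan is to prove Lemma \ref{2.2l2} by a straightforward contradiction argument based directly on the definition of the separation distance. Suppose the conclusion fails; then by outer regularity (or simply by monotonicity of measure) there exists some $\e_0>0$ with $\mu_X\big(\bigcup_{i=0}^{k-1} C_{r+\e_0}(A_i)\big)\leq 1-\kappa_k$. The natural candidate for the ``missing'' $(k+1)$st set is then $A_k:=X\setminus \bigcup_{i=0}^{k-1} C_{r+\e_0}(A_i)$.

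First I would check that this $A_k$ has measure at least $\kappa_k$: this is immediate from the assumed inequality, since $\mu_X(A_k)=1-\mu_X\big(\bigcup_{i=0}^{k-1} C_{r+\e_0}(A_i)\big)\geq \kappa_k$. Next I would verify the separation of $A_k$ from each $A_i$ with $i\leq k-1$: since $A_k$ is disjoint from the closed $(r+\e_0)$-neighborhood of $A_i$, every point of $A_k$ has distance strictly greater than $r$ (in fact at least $r+\e_0$) from $A_i$, so $\dist(A_k,A_i)\geq r+\e_0$. Combined with the hypothesis $\dist(A_i,A_j)>r$ for the original $k$ subsets, the $(k+1)$-tuple $A_0,\dots,A_k$ satisfies $\mu_X(A_i)\geq\kappa_i$ for all $i=0,\dots,k$ and has $\min_{i\neq j}\dist(A_i,A_j)>r$.

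This contradicts the definition of $\sep$: the supremum of $\min_{i\neq j}\dist(A_i,A_j)$ over all such admissible $(k+1)$-tuples is exactly $r=\sep(X;\kappa_0,\dots,\kappa_k)$, yet we have produced a tuple exceeding it. Hence no such $\e_0$ exists, so $\mu_X\big(\bigcup_{i=0}^{k-1} C_{r+\e}(A_i)\big)> 1-\kappa_k$ for every $\e>0$. Finally, letting $\e\to 0$ and using that $\bigcup_{i=0}^{k-1} C_{r+\e}(A_i)$ decreases to $\bigcup_{i=0}^{k-1} C_r(A_i)$ as $\e\downarrow 0$ (each $C_r(A_i)=\bigcap_{\e>0}C_{r+\e}(A_i)$ since these neighborhoods are closed and nested), continuity of measure from above yields $\mu_X\big(\bigcup_{i=0}^{k-1} C_r(A_i)\big)\geq 1-\kappa_k$.

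There is no serious obstacle here; the only point requiring a small amount of care is the passage to the limit $\e\to 0$, where one must note that the closed neighborhoods $C_{r+\e}(A_i)$ form a decreasing family whose intersection is $C_r(A_i)$, so that monotone convergence of measures applies to the finite union. Everything else is a direct unwinding of Definition \ref{2.2d1}.
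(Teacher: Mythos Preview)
Your proof is correct and follows essentially the same route as the paper's: assume some $\e_0>0$ with $\mu_X\big(\bigcup_{i=0}^{k-1} C_{r+\e_0}(A_i)\big)\leq 1-\kappa_k$, set $A_k$ equal to the complement, observe that the resulting $(k+1)$-tuple violates the definition of $r=\sep(X;\kappa_0,\dots,\kappa_k)$, and then pass to the limit $\e\to 0$ via continuity from above. The only cosmetic difference is that you open by negating the conclusion and deducing the existence of $\e_0$, whereas the paper (and the second half of your own argument) simply proves directly that no such $\e_0$ exists and then takes the limit; the first step is therefore redundant, and your stated justification for it (``monotonicity'') should really be continuity from above, as you yourself note at the end.
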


\begin{lem}\label{medul1}For any $\kappa>0$ we have
 \begin{align*}
            \ler (X;-\kappa)\leq \sep (X;\kappa/2,\kappa/2)
 \end{align*}
 \begin{proof}The lemma follows from a result in \cite{gromov}
  (cf.~\cite[Lemmas
  2.22, 2.24]{funa6}).
  \end{proof}
 \end{lem}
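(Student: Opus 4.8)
The plan is to prove the inequality $\ler(X;-\kappa)\leq \sep(X;\kappa/2,\kappa/2)$ directly from the definitions, using the median to split $X$ into two sets of measure $\geq 1/2$. Write $\rho := \sep(X;\kappa/2,\kappa/2)$ and fix an arbitrary $1$-Lipschitz function $f\colon X\to\mathbb{R}$. Set $m:=\lm(f;\mu_X)$ and, for the purpose of the estimate, it suffices to bound $\mu_X(\{f\geq m+\rho'\})$ and $\mu_X(\{f\leq m-\rho'\})$ separately for every $\rho'>\rho$, and then let $\rho'\to\rho$.

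First I would treat the upper tail. Suppose for contradiction that $\mu_X(\{x\mid f(x)\geq m+\rho'\})>\kappa/2$ for some $\rho'>\rho$. Let $a_f$ be the left endpoint of the median interval $[a_f,b_f]$; since $m\geq a_f$, the set $A_0:=\{x\mid f(x)\leq a_f\}$ has $\mu_X(A_0)\geq 1/2\geq \kappa/2$ (using that $a_f$ is a median and $\kappa\leq 1$, which we may assume since otherwise the L\'evy radius is trivially $0$), and $A_1:=\{x\mid f(x)\geq m+\rho'\}$ has $\mu_X(A_1)>\kappa/2$. For $a\in A_0$ and $b\in A_1$ we have $f(b)-f(a)\geq (m+\rho')-a_f\geq \rho'$ because $m\geq a_f$, and since $f$ is $1$-Lipschitz this forces $\dist(a,b)\geq f(b)-f(a)\geq \rho'$. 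Hence $\dist(A_0,A_1)\geq \rho' > \rho = \sep(X;\kappa/2,\kappa/2)$, contradicting the definition of the separation distance as the supremum of $\min_{i\neq j}\dist(A_i,A_j)$ over pairs of Borel sets of measure $\geq \kappa/2$. A symmetric argument, using $b_f$ and the set $\{x\mid f(x)\geq b_f\}$ against $\{x\mid f(x)\leq m-\rho'\}$, handles the lower tail.

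Combining the two tail estimates, for every $\rho'>\rho$ we get $\mu_X(\{x\mid |f(x)-m|\geq \rho'\})\leq \mu_X(\{f\geq m+\rho'\})+\mu_X(\{f\leq m-\rho'\})\leq \kappa/2+\kappa/2=\kappa$; strictly speaking one argues $\mu_X(\{f\geq m+\rho'\})\leq \kappa/2$ and likewise for the other side, so the sum is $\leq\kappa$. Since $f$ was an arbitrary $1$-Lipschitz function and this holds for all $\rho'>\rho$, the definition of L\'evy radius gives $\ler(X;-\kappa)\leq \rho'$ for every $\rho'>\rho$, hence $\ler(X;-\kappa)\leq\rho=\sep(X;\kappa/2,\kappa/2)$.

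The only genuinely delicate point is the bookkeeping at the endpoints of the median interval: one must be careful to use $a_f$ (not $m$) when producing the lower set $A_0$ for the upper-tail contradiction, and $b_f$ for the lower-tail contradiction, so that both comparison sets have measure at least $1/2$ while still keeping the gap to the tail set at least $\rho'$. Everything else is a routine application of the $1$-Lipschitz property and the definitions; I expect no real obstacle, and indeed this is exactly the content of the cited result of Gromov (cf.~\cite[Lemmas 2.22, 2.24]{funa6}).
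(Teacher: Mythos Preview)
Your argument is correct. The paper itself gives no proof and merely cites Gromov and \cite[Lemmas~2.22, 2.24]{funa6}; what you have written is precisely the standard direct argument those references contain, so your approach coincides with the one implicitly invoked by the paper.
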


\subsection{Three distances between probability measures}
Let $X$ be a complete separable metric space. We denote by $\mathcal{P}(X)$
the set of Borel probability measures on $X$.

 \begin{dfn}[Prohorov distance]\label{2.3d1}
   Given two measures $\mu,\nu \in \mathcal{P}(X)$ and $\lambda \geq 0$,
   we define the \emph{Prohorov distance} $\di_{\lambda}(\mu,\nu)$
   as the infimum of $\varepsilon >0$ such that
  \begin{align}\label{2.3s1}
   \mu ({C}_{\varepsilon}(A))\geq
    \nu(A)-\lambda \varepsilon \text{ and }\nu({C}_{\varepsilon}(A))\geq
    \mu(A)-\lambda \varepsilon
  \end{align}
  for any Borel subsets $A\subseteq X$.
\end{dfn}

For any $\lambda \geq 0$, the function $\di_{\lambda}$ is a complete
separable distance function on $\mathcal{P}(X)$. If $\lambda >0$, then
the topology on $\mathcal{P}(X)$ determined by the Prohorov distance
function $\di_{\lambda}$ coincides with that of the weak convergence
(see \cite[Section 6]{bil}).
The distance functions $\di_{\lambda}$ for all $\lambda > 0$
are equivalent to each other. Also it is known that if $\mu
({C}_{\varepsilon}(A))\geq \nu(A)-\lambda \varepsilon$ for any Borel
subsets $A$ of $X$, then $\di_{\lambda}(\mu,\nu)\leq \e$. In other
words, the second inequality in (\ref{2.3s1}) follows from the first one
(see \cite[Section 6]{bil}).

For $(x,y)\in X\times X$, we put $\pr_1(x,y):=x$ and
$\pr_2(x,y):=y$. For two finite Borel measures $\mu$ and $\nu$ on $X$,
we write $\mu \leq \nu$ if $\mu(A)\leq \nu(A)$ for any Borel subset
$A\subseteq X$. A finite Borel measure $\pi$ on $X\times X$ is called
a \emph{partial transportation} from $\mu \in \mathcal{P}(X)$ to $\nu
\in \mathcal{P}(X)$ if $(\pr_1)_{\ast}(\pi)\leq \mu$ and
$(\pr_2)_{\ast}(\pi)\leq \nu$. Note that we do not assume $\pi$ to be
a probability measure. For a partial transportation $\pi$ from $\mu$
to $\nu$, we define its \emph{deficiency} $\mushi \pi$ by $\mushi \pi
:= 1-\pi(X\times X)$. Given $\varepsilon >0$, the partial
transportation $\pi$ is called an \emph{$\varepsilon$-transportation}
from $\mu$ to $\nu$ if it is supported in the subset
\begin{align*}
  \{ (x,y)\in X\times X \mid \dist(x,y)\leq
  \varepsilon\}.
\end{align*}

\begin{dfn}[Transportation distance]\label{2.3d2}
  Let $\lambda\geq 0$. For two probability measures $\mu,\nu \in
  \mathcal{P}(X)$, we define the \emph{transportation distance}
  $\tra_{\lambda}(\mu,\nu)$ between $\mu$ and $\nu$
  as the infimum of $\varepsilon >0$ such
  that there exists an $\varepsilon$-transportation $\pi$ from $\mu$
  to $\nu$ satisfying $\mushi \pi \leq \lambda \varepsilon$.
\end{dfn}

The following theorem is due to V.~Strassen.

\begin{thm}[{\cite[Corollary 1.28]{villani2},
    \cite[Section $3\frac{1}{2}.10$]{gromov}}]\label{2.3t3}
  For any $\lambda>0$, we have
  \[
  \tra_{\lambda} = \di_{\lambda}.
  \]
\end{thm}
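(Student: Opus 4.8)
The plan is to prove the two inequalities $\di_{\lambda}\leq\tra_{\lambda}$ and $\tra_{\lambda}\leq\di_{\lambda}$ between these distance functions on $\mathcal{P}(X)$ separately. The first is an elementary consequence of Definitions \ref{2.3d1} and \ref{2.3d2}; the second is the essential content of Strassen's theorem, and I would obtain it by a max-flow min-cut argument when $X$ is finite, followed by an approximation and weak-compactness argument in general.

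\emph{The inequality $\di_{\lambda}(\mu,\nu)\leq\tra_{\lambda}(\mu,\nu)$.} Let $\varepsilon>\tra_{\lambda}(\mu,\nu)$ and pick an $\varepsilon$-transportation $\pi$ from $\mu$ to $\nu$ with $\df\pi\leq\lambda\varepsilon$. For a Borel set $A\subseteq X$, using $(\pr_2)_{\ast}(\pi)\leq\nu$ on $X\setminus A$ gives $(\pr_2)_{\ast}(\pi)(A)=\pi(X\times X)-(\pr_2)_{\ast}(\pi)(X\setminus A)\geq(1-\df\pi)-\nu(X\setminus A)=\nu(A)-\df\pi\geq\nu(A)-\lambda\varepsilon$; and since $\pi$ is concentrated on $\{\dist(x,y)\leq\varepsilon\}$ we also have $(\pr_2)_{\ast}(\pi)(A)=\pi(X\times A)\leq\pi(C_{\varepsilon}(A)\times X)\leq\mu(C_{\varepsilon}(A))$. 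Hence $\mu(C_{\varepsilon}(A))\geq\nu(A)-\lambda\varepsilon$ for every Borel $A$, and by the remark following Definition \ref{2.3d1} this yields $\di_{\lambda}(\mu,\nu)\leq\varepsilon$. Letting $\varepsilon\downarrow\tra_{\lambda}(\mu,\nu)$ finishes this half.

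\emph{The inequality $\tra_{\lambda}(\mu,\nu)\leq\di_{\lambda}(\mu,\nu)$, finite case.} Assume $X$ is finite and fix $\varepsilon>\di_{\lambda}(\mu,\nu)$, so that $\nu(C_{\varepsilon}(A))\geq\mu(A)-\lambda\varepsilon$ for all $A$. Form a network with source $s$, sink $t$, a vertex $x'$ for each $x\in X$ and a vertex $y''$ for each $y\in X$; an edge $s\to x'$ of capacity $\mu(\{x\})$, an edge $y''\to t$ of capacity $\nu(\{y\})$, and an edge $x'\to y''$ of infinite capacity whenever $\dist(x,y)\leq\varepsilon$. By the max-flow min-cut theorem it suffices to show every cut of finite capacity has capacity at least $1-\lambda\varepsilon$: if $U=\{x: x'\text{ lies on the source side}\}$, finiteness forbids cutting an infinite-capacity edge, so every $y$ with $\dist(x,y)\leq\varepsilon$ for some $x\in U$ lies on the source side; thus the set $A$ of source-side $\nu$-vertices contains $C_{\varepsilon}(U)$, and the cut capacity is $\mu(X\setminus U)+\nu(A)\geq\mu(X\setminus U)+\nu(C_{\varepsilon}(U))\geq\mu(X\setminus U)+\mu(U)-\lambda\varepsilon=1-\lambda\varepsilon$. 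A maximal flow therefore has value at least $1-\lambda\varepsilon$; setting $\pi(\{(x,y)\})$ equal to the flow along $x'\to y''$ produces a measure on $\{\dist(x,y)\leq\varepsilon\}$ with $(\pr_1)_{\ast}(\pi)\leq\mu$ and $(\pr_2)_{\ast}(\pi)\leq\nu$ (flow conservation at $x'$ and $y''$) and $\df\pi=1-\pi(X\times X)\leq\lambda\varepsilon$, so $\tra_{\lambda}(\mu,\nu)\leq\varepsilon$, and hence $\tra_{\lambda}(\mu,\nu)\leq\di_{\lambda}(\mu,\nu)$.

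\emph{General case.} For a general complete separable metric space I would fix $\varepsilon>\di_{\lambda}(\mu,\nu)$ and $\delta>0$, approximate $\mu,\nu$ by finitely supported measures $\mu_{\delta},\nu_{\delta}$ on a common finite set with $\di_{\lambda}(\mu,\mu_{\delta})<\delta$ and $\di_{\lambda}(\nu,\nu_{\delta})<\delta$ (possible since finitely supported measures are dense and $\di_{\lambda}$ is a metric), so that $\di_{\lambda}(\mu_{\delta},\nu_{\delta})<\varepsilon+2\delta$ by the triangle inequality. Applying the finite case to the finite metric space supporting $\mu_{\delta}$ and $\nu_{\delta}$ yields an $(\varepsilon+2\delta)$-transportation $\pi_{\delta}$ from $\mu_{\delta}$ to $\nu_{\delta}$ with $\df\pi_{\delta}\leq\lambda(\varepsilon+2\delta)$. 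Since the marginals of $\pi_{\delta}$ converge weakly to $\mu$ and $\nu$, the family $\{\pi_{\delta}\}$ is tight, and a weak subsequential limit $\pi$ as $\delta\to 0$ is a partial transportation from $\mu$ to $\nu$ supported in the closed set $\{\dist(x,y)\leq\varepsilon\}$ with $\df\pi\leq\lambda\varepsilon$, giving $\tra_{\lambda}(\mu,\nu)\leq\varepsilon$ for every $\varepsilon>\di_{\lambda}(\mu,\nu)$, and hence the claim. I expect this last step to be the main obstacle: one must verify that the support constraint and the domination $(\pr_i)_{\ast}(\pi)\leq$ (the given marginal) survive the weak limit, track how the mesh $\delta$ enlarges both the neighborhood radius and the deficiency bound, and use tightness to exclude loss of mass in the limit — routine in spirit but delicate, which is presumably why the result is quoted from \cite{villani2} and \cite{gromov} rather than reproved here.
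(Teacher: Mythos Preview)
The paper does not prove Theorem~\ref{2.3t3}; it merely attributes the result to Strassen and cites \cite[Corollary~1.28]{villani2} and \cite[Section~$3\frac{1}{2}.10$]{gromov}, so there is no in-paper argument to compare against. Your outline is the standard proof: the easy direction $\di_\lambda\leq\tra_\lambda$ is handled correctly, and the hard direction via max-flow min-cut on a finite space followed by weak approximation is exactly the classical route taken in the references. The details you flag in the last paragraph (that the closed support constraint $\{\dist\leq\varepsilon\}$ and the marginal dominations pass to weak limits via Portmanteau, and that tightness of the dominated marginals prevents loss of mass) all go through as you suspect, so the sketch is sound.
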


Let $(X,\dist)$ be a complete metric space. We indicate by
$\mathcal{P}^2(X)$ the set of all Borel probability measures $\nu \in
\mathcal{P}(X)$ such that
\[
\int_{X}\dist(x,y)^2 d\nu (y)<+\infty
\]
for some $x\in X$.

\begin{dfn}[($L^2$-)Wasserstein distance]\label{2.3d4}
  For two probability measures $\mu,\nu \in \mathcal{P}^2(X)$, we
  define the \emph{$L^2$-Wasserstein distance $\dist_2^W(\mu,\nu)$}
  between $\mu$ and $\nu$ as the infimum of
  \begin{align*}
    \Big(\int_{X\times X}\dist(x,y)^2  d\pi
    (x,y)\Big)^{1/2},
  \end{align*}
  where $\pi \in \mathcal{P}^2(X\times X)$ runs over all \emph{couplings}
  of $\mu$ and $\nu$,
  i.e., probability measures $\pi$ with the property that $\pi (A\times
  X)=\mu(A)$ and $\pi (X\times A)=\nu (A)$ for any Borel subset
  $A\subseteq X$. It is known that this infimum is achieved by some
 transport plan, which we call an \emph{optimal transport plan} for $\dist_2^W(\mu,\nu)$. 
\end{dfn}

If the underlying space $X$ is compact, then the topology on
$\mathcal{P}(X)$ induced from the $L^2$-Wasserstein distance function
coincides with that of the weak convergence (see \cite[Theorem 7.12]{villani2}).

\subsection{Weighted Alexandrov spaces}\label{weight}

We refer to \cite{bbi,bgp} for basics of Alexandrov spaces and to \cite[Section 4]{Kuwae1}, \cite{Kuwae} for analysis of
Alexandrov spaces.

Let $X$ be a compact $n$-dimensional Alexandrov space and
$\mathcal{H}^n$ be its Hausdorff measure.
Let $\mu$ be a probability measure on $X$ defined
by $d\mu := e^{-V}d\mathcal{H}^n$, where $V$ is a function on $X$ with a
certain regularity condition (e.g., any Lipschitz continuous function is sufficient
in the following argument). For the measure $\mu$ we define the \emph{weighted Laplacian} (also called as the
\emph{Witten Laplacian}) $\Delta_{\mu}$ by
\begin{align*}
 \Delta_{\mu}:=\Delta + \nabla V\cdot \nabla  =-e^{-V} \divv (e^{-V}\nabla \cdot),
\end{align*}where $\Delta$ is the nonnegative Laplacian. $\Delta_{\mu}$
has discrete spectrum consisting of eigenvalues
\begin{align*}
 0=\lambda_0(X,\mu)<\lambda_1(X,\mu)\leq \cdots \leq \lambda_k(X,\mu)\leq
 \cdots. 
 \end{align*}

 We remark that Chung-Grigor'yan-Yau's theorem (the case where $k=l$ in
     Theorem \ref{Mthm}) holds for weighted compact finite-dimensional Alexandrov
     spaces. In fact, in the proof of the theorem we need only the
     Davies-Gaffney (weighted) heat kernel
     estimate
     \begin{align*}
      \int_A\int_B p_t(x,y)d\mu(x)d\mu(y)\leq \sqrt{\mu(A)\mu(B)}\exp\Big(-\frac{\dist^2(A,B)}{4t}\Big)
      \end{align*}for any Borel subsets $A,B$ and asymptotic expansion
      of (weighted) heat kernel by eigenvalues and
     eigenfunctions of Laplacian (\cite{cgy1}). These are true for
     weighted compact finite-dimensional Alexandrov spaces (\cite{sturm4},
     \cite{Kuwae}).

\section{Proof of Theorem \ref{Mthm}}\label{prfmain}

       In order to prove Theorem \ref{Mthm} we need to explain some
       useful tools from the theory of optimal transportation. Refer to \cite{villani2,villani}
       for more details.

Let $(X,\dist)$ be a metric space. A rectifiable curve
$\gamma:[\, 0,1\,]\to X$ is called a \emph{geodesic} if its arclength
coincides with the distance $\dist(\gamma(0),\gamma(1))$ and it has a
constant speed, i.e., parameterized proportionally to the arclength. We
say that a metric space is a \emph{geodesic space}
if any two points are joined by a geodesic between them. It is known
that $(\mathcal{P}^2(X),\dist_2^W)$ is a compact geodesic space as soon as
$X$ is (\cite[Proposition 2.10]{sturm}). 


Let $X$ be a finite-dimensional Alexandrov space. For two probability measures $\mu_0,\mu_1\in \mathcal{P}^2(M)$ which are
absolutely continuous with respect to the Hausdorff measure, there is a unique geodesic $(
\mu_t)_{t\in [\,0,1\,]}$ between them with respect to the $L^2$-Wasserstein
distance function $\dist_2^W$ (\cite{mcc}, \cite[Theorem 1.1]{ber}).

 For an mm-space $X$ let us denote by $\Gamma$ the set of minimal
 geodesics $\gamma:[\,0,1\,]\to X$ endowed with the distance
 \begin{align*}
  \dist_{\Gamma}(\gamma_1,\gamma_2):=\sup_{t\in [\,0,1\,]}\dist(\gamma_1(t),\gamma_2(t)).
  \end{align*}Define the \emph{evaluation map} $e_t:\Gamma \to X$ for
  $t\in [\,0,1\,]$ as $e_t(\gamma):=\gamma(t)$. A probability measure
  $\Pi\in \mathcal{P}(\Gamma)$ is called a \emph{dynamical optimal
  transference plan} if the curve $\mu_t:=(e_t)_{\ast}\Pi$, $t\in [\,0,1\,]$, is a
  geodesic in $(\mathcal{P}^2(X),\dist_2^W)$. Then
  $\pi:=(e_0\times e_1)_{\ast}\Pi$ is an optimal coupling of $\mu_0$ and
  $\mu_1$, where $e_0\times e_1:\Gamma\to X\times X$ is the ``endpoints''
  map, i.e., $(e_0\times e_1)(\gamma):=(e_0(\gamma),e_1(\gamma))$.

  \begin{lem}[{\cite[Proposition 2.10]{lott-villani}}]\label{3l1}If
  $(X,\dist)$ is locally compact, then any geodesic
   $(\mu_t)_{t\in [\,0,1\,]}$ in
  $(\mathcal{P}^2(X),\dist_2^W)$ is associated with a dynamical optimal transference
  plan $\Pi$, i.e., $\mu_t=(e_t)_{\ast}\Pi$.
   \end{lem}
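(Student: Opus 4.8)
The plan is to realize $\Pi$ as a weak limit of measures on $\Gamma$ assembled from optimal couplings at dyadic times, using local compactness precisely to get the compactness needed to pass to the limit. Fix $n\ge 1$ and the partition $t_i:=i/2^n$, $0\le i\le 2^n$. For each $i$ choose an optimal coupling $\pi_i\in\mathcal{P}(X\times X)$ of $\mu_{t_i}$ and $\mu_{t_{i+1}}$, and, applying the gluing lemma on the Polish space $X$ successively, glue the $\pi_i$ along the common marginals $\mu_{t_i}$ into a single measure $\sigma_n\in\mathcal{P}(X^{2^n+1})$ whose consecutive two-dimensional marginals are the $\pi_i$.

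The next step is a rigidity argument. Pointwise $\dist(x_0,x_{2^n})\le\sum_i\dist(x_i,x_{i+1})$, so by Minkowski's inequality in $L^2(\sigma_n)$,
\[
\Big(\int\dist(x_0,x_{2^n})^2\,d\sigma_n\Big)^{1/2}\le\sum_i\Big(\int\dist(x_i,x_{i+1})^2\,d\sigma_n\Big)^{1/2}=\sum_i\dist_2^W(\mu_{t_i},\mu_{t_{i+1}})=\dist_2^W(\mu_0,\mu_1),
\]
the last equality because $(\mu_t)$ is a geodesic. Since the $(0,2^n)$-marginal of $\sigma_n$ is a coupling of $\mu_0$ and $\mu_1$, the left-hand side is also $\ge\dist_2^W(\mu_0,\mu_1)$, so every inequality is an equality. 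This forces, for $\sigma_n$-a.e. tuple $(x_0,\dots,x_{2^n})$, that $\dist(x_0,x_{2^n})=\sum_i\dist(x_i,x_{i+1})$ (the $x_i$ lie in order on a minimal geodesic) and $\dist(x_i,x_{i+1})=2^{-n}\dist(x_0,x_{2^n})$ (constant speed), and it also forces the $(0,2^n)$-marginal of $\sigma_n$ to be an optimal coupling of $\mu_0,\mu_1$. Applying a measurable selection theorem to the closed subset of $X^{2^n+1}\times\Gamma$ of pairs $\big((x_i)_i,\gamma\big)$ with $\gamma$ a minimal geodesic and $\gamma(t_i)=x_i$ for all $i$, I get a Borel map sending $\sigma_n$-a.e.\ tuple to an interpolating minimal geodesic; pushing $\sigma_n$ forward by it yields $\Pi_n\in\mathcal{P}(\Gamma)$ with $(e_{t_i})_{\ast}\Pi_n=\mu_{t_i}$ for $0\le i\le 2^n$ and with $(e_0\times e_1)_{\ast}\Pi_n$ optimal.

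The crux — and the place where local compactness is indispensable — is the tightness of $\{\Pi_n\}_n$. Given $\e>0$, choose a compact $K\subseteq X$ with $\mu_0(X\setminus K)<\e$ and $R>0$ with $\dist_2^W(\mu_0,\mu_1)^2/R^2<\e$. Since $X$ is a complete, locally compact geodesic (length) space, the Hopf--Rinow theorem makes it proper, so $\overline{C_R(K)}$ is compact; any minimal geodesic $\gamma$ with $\gamma(0)\in K$ and length $\le R$ takes values in this compact set and is $R$-Lipschitz, so the set $\Gamma_{K,R}$ of such geodesics is relatively compact in $(\Gamma,\dist_{\Gamma})$ by the Arzel\`a--Ascoli theorem. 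As $(e_0)_{\ast}\Pi_n=\mu_0$ and, by Chebyshev's inequality together with the optimality of $(e_0\times e_1)_{\ast}\Pi_n$, $\Pi_n(\{\gamma:\dist(\gamma(0),\gamma(1))>R\})\le\dist_2^W(\mu_0,\mu_1)^2/R^2<\e$, we get $\Pi_n(\Gamma\setminus\Gamma_{K,R})<2\e$ for all $n$; hence $\{\Pi_n\}$ is tight, and by Prohorov's theorem a subsequence converges weakly to some $\Pi\in\mathcal{P}(\Gamma)$. Finally, each $e_t$ is $1$-Lipschitz from $(\Gamma,\dist_{\Gamma})$ to $X$, so $(e_t)_{\ast}$ is weakly continuous; for dyadic $t$ one has $(e_t)_{\ast}\Pi_n=\mu_t$ for all large $n$, hence $(e_t)_{\ast}\Pi=\mu_t$, and since $t\mapsto\mu_t$ and $t\mapsto(e_t)_{\ast}\Pi$ (the latter by dominated convergence as $t\to t_0$, using continuity of each $\gamma$) are both weakly continuous and agree on the dense set of dyadic $t$, they agree for every $t\in[0,1]$. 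Thus $\mu_t=(e_t)_{\ast}\Pi$ with $(\mu_t)$ a geodesic in $(\mathcal{P}^2(X),\dist_2^W)$, i.e.\ $\Pi$ is a dynamical optimal transference plan. I expect the rigidity/measurable-selection step and, above all, the tightness estimate to be the delicate points: without local compactness the equi-Lipschitz families $\Gamma_{K,R}$ need not be precompact and the statement genuinely fails.
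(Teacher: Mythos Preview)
The paper does not supply a proof of this lemma; it is quoted verbatim with a citation to \cite[Proposition~2.10]{lott-villani} and then used as a black box in the proof of Theorem~\ref{3t2}. Your argument is essentially the standard one found in that reference (and in Chapter~7 of \cite{villani}): glue optimal couplings along a dyadic partition, use the equality case of Minkowski together with the triangle inequality to force the glued tuples onto constant-speed minimizing segments, push forward via a measurable selection to obtain $\Pi_n\in\mathcal{P}(\Gamma)$, and extract a weak limit using tightness furnished by Hopf--Rinow and Arzel\`a--Ascoli. The steps are correct.

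One small caveat: the lemma as phrased in the paper assumes only local compactness, whereas your invocation of Hopf--Rinow (to get properness) tacitly also uses that $X$ is a complete length space. This is implicit in the paper's ambient setting of compact Alexandrov spaces, and is likewise assumed in \cite{lott-villani}, so there is no real gap---just be aware that ``locally compact'' alone would not suffice.
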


 Let $\mu$ and $\nu$ be two probability measures on a set $X$. We define
 the \emph{relative entropy} $\ent_{\mu}(\nu)$ of $\nu$ with respect to
 $\mu$ as follows. If $\nu$ is absolutely continuous with respect to $\mu$,
 writing $d\nu=\rho d\mu$, then
 \begin{align*}
  \ent_{\mu} (\nu):=\int_X \rho \log \rho   d \mu,
  \end{align*}otherwise $\ent_{\mu}(\nu):=\infty$.

  \begin{dfn}[{Curvature-dimension condition, \cite{lott-villani}, \cite{sturm3,sturm}}]\label{3d1}Let $K$ be a real number. We say that
   a locally compact mm-space $X$ satisfies the \emph{curvature-dimension condition} $CD(K,\infty)$
   if for any $\nu_0,\nu_1\in \mathcal{P}^2(X)$ there exists a minimal
   geodesic $( \nu_t )_{t\in [\,0,1\,]}$ in $(\mathcal{P}^2(X),\dist_2^W)$
   from $\nu_0$ to $\nu_1$ such that
   \begin{align*}
    \ent_{\mu_X}(\nu_t)\leq (1-t)\ent_{\mu_X}(\nu_0)+t
   \ent_{\mu_X}(\nu_1)-\frac{K}{2}(1-t)t \dist_2^W(\nu_0,\nu_1)^2
    \end{align*}for any $t\in [\,0,1\,]$.
   \end{dfn}
  \begin{ex}
   \begin{enumerate}
    \item
 A complete weighted Riemannian manifold $(M,\mu)$ has Bakry-\'Emery
         Ricci curvature $\geq K$ for some $K\in \mathbb{R}$ if and only if
  $(M,\mu)$ satisfies $\cd (K,\infty)$ (\cite{cms1,cms2}, \cite{vrs},
         \cite{sturm2}).
     \item An $n$-dimensional Alexandrov space of curvature $\geq K$
           satisfies $\cd ((n-1)K,\infty)$ (\cite{pet}, \cite{zhang-zhu}).
    \end{enumerate}
  \end{ex}

In the above definition, assume that both $\nu_0$ and $\nu_1$ are
absolutely continuous with respect to $\mu_X$. Then Jensen's inequality applied to the convex function $r\mapsto r\log r$ gives
\begin{align}\label{3s1}
     & \log \mu_X(\supp \nu_t)\\ \geq \ & -(1-t) \int_{X} \rho_0 \log
 \rho_0 d\mu_X -t\int_X  \rho_1 \log \rho_1 d\mu_X + \frac{Kt(1-t)}{2}
 \dist_2^W(\nu_0,\nu_1)^2, \notag
\end{align}where $\rho_0$ and $\rho_1$ are densities of $\nu_0$
 and $\nu_1$ with respect to $\mu_X$ respectively.
 In particular, for two Borel subsets $A,B\subseteq X$ with
 $\mu_X(A),\mu_X(B)>0$, we have
 \begin{align}\label{3s1cont}
  &\log \mu_X (\supp \nu_t)\\ \geq \ &(1-t)\log \mu_X(A)+t\log
  \mu_X(B)+\frac{Kt(1-t)}{2}\dist_2^W\Big(\frac{\mu_X|_A}{\mu_X(A)},
  \frac{\mu_X|_B}{\mu_X(B)} \Big)^2 \tag*{}
  \end{align}(\cite{sturm}).

Theorem \ref{Mthm} follows from the following key theorem together with
Chung-Grigor'yan-Yau's theorem.

\begin{thm}\label{3t2}
 Let $(X,\mu)$ be an weighted finite-dimensional Alexandrov space
 satisfying $\cd(0,\infty)$ and $k\geq 2$. If $(X,\mu)$
 satisfies
 \begin{align}\label{3ss1}
  \sep((X,\mu);\underbrace{\kappa,\kappa, \cdots,\kappa}_{k+1\text{ times}})\leq
  \frac{1}{D} \log \frac{1}{\kappa}
  \end{align}for any $\kappa>0$, then we have
 \begin{align}\label{conc1}
  \sep((X,\mu);\underbrace{\kappa,\kappa, \cdots,\kappa}_{k\text{ times}})\leq
  \frac{c}{D} \log \frac{1}{\kappa}
  \end{align}for any $\kappa>0$ and for some universal numeric constant
 $c>0$.
 \end{thm}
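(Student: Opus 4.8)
I would argue by contradiction. Fix $\kappa>0$; since $\sep((X,\mu);\underbrace{\kappa,\dots,\kappa}_{k})=0$ whenever $k\kappa>1$ (the support of $\mu$ is connected), we may assume $k\kappa\le 1$, so $\log\frac1\kappa\ge\log k$. Suppose $\sep((X,\mu);\underbrace{\kappa,\dots,\kappa}_{k})>\frac cD\log\frac1\kappa$ for a large universal constant $c$ to be pinned down at the end, and choose Borel sets $B_0,\dots,B_{k-1}$ with $\mu(B_i)\ge\kappa$ and $r:=\min_{i\ne j}\dist(B_i,B_j)>\frac cD\log\frac1\kappa$. Every estimate below will bound $r$ by $\frac{(\mathrm{universal})}{D}\log\frac1\kappa$, contradicting this choice of $r$ once $c$ is taken larger than each such constant. (Note that replacing $\kappa$ by a fixed power $\kappa^m$ at some stage only multiplies $\log\frac1\kappa$ by $m$, and a factor of $k$ can always be absorbed since $\log k\le\log\frac1\kappa$; so such losses are harmless.)

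The proof rests on two inputs. First, as $c>1$ we have $r>\frac1D\log\frac1\kappa\ge\sep((X,\mu);\underbrace{\kappa,\dots,\kappa}_{k+1})$ by \eqref{3ss1}, so I would apply Lemma \ref{2.2l2} with $\kappa_0=\dots=\kappa_{k-1}=\kappa$ and the free slot $\kappa_k=\kappa'$ ranging over $(e^{-Dr},\kappa]$: for such $\kappa'$ one has $\sep((X,\mu);\underbrace{\kappa,\dots,\kappa}_{k},\kappa')\le\sep((X,\mu);\underbrace{\kappa',\dots,\kappa'}_{k+1})\le\frac1D\log\frac1{\kappa'}<r$, so the hypothesis of that lemma holds, and it yields that the $1$-Lipschitz function $g:=\min_i\dist(\cdot,B_i)$ obeys the exponential tail bound $\mu(\{g>t\})\le e^{-Dt}$ for all $t\in[\frac1D\log\frac1\kappa,r)$. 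Second, I would take $B_0,B_1$ to be a closest pair, so $\dist(B_0,B_1)=r$; by Lemma \ref{3l1} and the $\cd(0,\infty)$ hypothesis (Definition \ref{3d1}, and its logarithmic consequence \eqref{3s1}), the midpoint $\nu_{1/2}$ of the unique $\dist_2^W$-geodesic $(\nu_t)_{t\in[0,1]}$ from $\mu|_{B_0}/\mu(B_0)$ to $\mu|_{B_1}/\mu(B_1)$ has support $M$ with $\mu(M)\ge\sqrt{\mu(B_0)\mu(B_1)}\ge\kappa$ and entropy $\ent_\mu(\nu_{1/2})\le\log\frac1\kappa$, and every $z\in M$ is a genuine midpoint $\gamma(\tfrac12)$ of a minimal geodesic $\gamma$ with $\gamma(0)\in B_0$, $\gamma(1)\in B_1$.

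The heart of the matter is a dichotomy on the diameters of the $B_i$. If every $B_i$ has diameter $\le r/4$, then for each $z=\gamma(\tfrac12)\in M$ and each $i$ one checks $\dist(z,B_i)\ge r/4$: for $i\in\{0,1\}$ because $\dist(z,\gamma(0))=\tfrac12\dist(\gamma(0),\gamma(1))\ge\tfrac r2$ while $\diam B_i\le\tfrac r4$, and for $i\ge 2$ because $\dist(z,B_i)\ge\dist(B_0,B_i)-\dist(\gamma(0),z)\ge r-\tfrac12\diam(B_0\cup B_1)\ge\tfrac r4$ (as $\diam(B_0\cup B_1)\le\tfrac32 r$). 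Hence $M\subseteq\{g\ge r/4\}$, so either $r/4<\frac1D\log\frac1\kappa$ outright, or $\kappa\le\mu(M)\le e^{-Dr/4}$ by the tail bound; in either case $r\le\frac4D\log\frac1\kappa$. To reduce the general case to this one, for each $B_i$ with $\diam B_i>r/4$ I would distinguish: if some ball of radius $r/8$ meets $B_i$ in a set of mass $\ge\mu(B_i)/(k+1)$, replace $B_i$ by that intersection (diameter $\le r/4$, mass $\ge\kappa/(k+1)$); otherwise $B_i$ is spread out, a maximal net in $\supp(\mu|_{B_i})$ at a suitable scale has more than $k+1$ points, and plugging the $k+1$ heaviest of the corresponding small balls into \eqref{3ss1} bounds $r$ directly by $\frac{(\mathrm{universal})}{D}\log\frac1\kappa$. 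After the replacements we are left with $k$ Borel sets of diameter $\le r/4$, pairwise at distance $\ge r$, of mass $\ge\kappa/(k+1)$, and the small-diameter case run with $\kappa/(k+1)$ in place of $\kappa$ closes the argument.

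The hard part will be exactly this reduction to the small-diameter case. The inequality \eqref{3s1} only makes $M$ \emph{large}, not far from the generating sets, and when $\diam B_0$ is comparable to $r$ a midpoint can return to within $O(1)$ of $B_0$ along a long transport geodesic. The remedy I have in mind is to use the entropy bound $\ent_\mu(\nu_{1/2})\le\log\frac1\kappa$ together with the tail bound for $g$ — via a Gibbs--Donsker--Varadhan inequality controlling $\nu_{1/2}(E)$ in terms of $\ent_\mu(\nu_{1/2})$ and $\log(1/\mu(E))$ for sets $E$ with $\mu(E)$ small — to show that the bulk of the midpoint mass already lies within $O(\frac1D\log\frac1\kappa)$ of $\bigcup_i B_i$, and then to run the midpoint--length estimates backwards. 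Organising this so that no mass which is merely super-polynomially small in $\kappa$ is ever produced, and so that the final constant $c$ does not depend on $k$, is the delicate point.
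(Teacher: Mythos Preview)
Your small-diameter case is clean and correct: once every $B_i$ has $\diam B_i\le r/4$, the midpoint support $M$ really does lie in $\{g\ge r/4\}$, the tail bound you extract from Lemma~\ref{2.2l2} gives $\mu(\{g\ge r/4\})\le e^{-Dr/4}$, and comparing with $\mu(M)\ge\kappa$ (from \eqref{3s1cont}) yields $r\le\frac{4}{D}\log\frac1\kappa$. This is a genuinely different and more geometric route than the paper's.

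The gap is in the reduction step. In the ``spread out'' alternative you assert that a maximal net in $\supp(\mu|_{B_i})$ produces more than $k+1$ points and that plugging the $k+1$ heaviest balls into \eqref{3ss1} bounds $r$. But the hypothesis ``no ball of radius $r/8$ meets $B_i$ in mass $\ge\mu(B_i)/(k+1)$'' gives you \emph{no} lower bound on the mass of the individual net-balls: if $B_i$ is, say, a long thin tube, the $N$ covering balls can each carry mass $\sim\mu(B_i)/N$ with $N$ arbitrarily large, and the $(k+1)$-st heaviest can be as small as you like. Feeding such a $\kappa'$ into \eqref{3ss1} only yields separation $\le\frac{1}{D}\log\frac{1}{\kappa'}$, which is huge and says nothing about $r$. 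Moreover, the separation between the net-balls is only of order the net scale, not $r$, so even with good mass you would be bounding the wrong quantity. Your closing Donsker--Varadhan remark is aimed at a different difficulty (controlling where $\nu_{1/2}$ sits when diameters are large) and does not repair this step either.

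The paper sidesteps the diameter issue entirely. Instead of geometric diameter it controls the \emph{L\'evy radius} of each $(A_i,\mu_{A_i})$: using \eqref{3ss1} once more (splitting $A_i$ into two pieces and padding with the remaining $A_j$'s to make $k+1$ sets) one gets $\ler((A_i,\mu_{A_i});-\kappa^8)\le\alpha/2$, so every $1$-Lipschitz function is nearly constant on a set $A_i'\subseteq A_i$ of almost full $\mu_{A_i}$-mass. This is the measure-theoretic substitute for small diameter that makes the transport estimate go through. The paper then transports not between the disjoint pair $A_0',A_1'$ but between the \emph{nested} pair $B_0:=A_0'$ and $B_1:=A_0'\cup A_1'$, builds the coupling via the Strassen/Prohorov theorem, and obtains a contradiction from the definite ratio $\mu(B_0)/\mu(B_1)\le 1/2$ after passing to a small time $t=\kappa^3$ along the Wasserstein geodesic. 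If you want to rescue your scheme, the natural move is to replace ``$\diam B_i\le r/4$'' by ``$\ler((B_i,\mu_{B_i});-\kappa^m)\le r/4$'' and argue that midpoints lie in $\{g\ge r/8\}$ up to a $\kappa^m$-small exceptional set; at that point you are essentially reproducing the paper's mechanism.
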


 \begin{proof}[Proof of Theorem \ref{Mthm}]In the assumption
  (\ref{3ss1}) of Theorem \ref{3t2} we can take $D$ as some universal constant times $\sqrt{\lambda_k(X,\mu)}$
  by Chung-Grigor'yan-Yau's
  theorem. Iterating Theorem \ref{3t2} $k-l$
  times we get
  \begin{align*}\sep
   ((X,\mu);\underbrace{\kappa,\kappa, \cdots,\kappa}_{l \text{
   times}})\leq \frac{c^{k-l+1}}{\sqrt{\lambda_k(X,\mu)}}\log\frac{1}{\kappa}
   \end{align*}for any $\kappa>0$ and for some universal constant
  $c>0$. Since
  \begin{align*}
  \sep((X,\mu);\kappa_0,\kappa_1,\cdots,\kappa_l)\leq \sep
   ((X,\mu);\underbrace{\min_{i}\kappa_i,\min_i \kappa_i, \cdots,\min_i \kappa_i}_{l \text{
   times}})
   \end{align*}for any $\kappa_0,\kappa_1,\cdots,\kappa_l>0$, we thereby obtain
  \begin{align*}\sep((X,\mu);\kappa_0,\kappa_1,\cdots,\kappa_l)\leq \frac{c^{k-l+1}}{\sqrt{\lambda_k(X,\mu)}}\max_i \log
   \frac{1}{\kappa_i}\leq
   \frac{c^{k-l+1}}{\sqrt{\lambda_k(X,\mu)}}\max_{i\neq j} \log
   \frac{1}{\kappa_i\kappa_j}.
   \end{align*}This completes the proof of Theorem \ref{Mthm}.
  \end{proof}
 The rough idea of the proof of Theorem \ref{3t2} in
 \cite{funa2} for smooth manifolds was the following. It turns out that it is enough to prove (\ref{conc1}) for sufficiently
 small $\kappa>0$ and sufficiently large $c>0$. We suppose the converse
 of this, i.e.,
 \begin{align*}
  \sep ((X,\mu);\underbrace{\kappa,\kappa, \cdots,\kappa}_{k\text{ times}})> \frac{c}{D}\log \frac{1}{\kappa}
  \end{align*}for sufficiently small $\kappa>0$ and sufficiently large
  $c>0$. Put $\alpha:= (c/D)\log (1/\kappa)$. By the definition of the
  separation distance there exists $k$
  Borel subsets $A_0,A_1.\cdots, A_{k-1}\subseteq M$ such that
  $\min_{i\neq j}\dist(A_i,A_j)>\alpha$ and $\mu(A_i)\geq \kappa$ for
  any $i$. If we choose $c$ is greater than $400$, then by the
  assumption (\ref{3ss1}) we have
  \begin{align*}
   \sep ((X,\mu);\underbrace{\kappa,\kappa, \cdots,\kappa}_{k\text{
   times}}, \kappa^{100})\leq \sep
   ((X,\mu);\underbrace{\kappa^{100},\kappa^{100},
   \cdots,\kappa^{100}}_{k+1\text{ times}})\leq \frac{100}{D}\log
   \frac{1}{\kappa}\leq \frac{\alpha}{4}.
   \end{align*}Lemma \ref{2.2l2} implies
   \begin{align*}
    \mu \Big(\bigcup_{i=0}^{k-1} C_{\alpha/4}(A_i) \Big)\geq 1-\kappa^{100}.
    \end{align*}It means that if $\kappa>0$ is sufficiently small, the measure
    of the set
    $\bigcup_{i=0}^{k-1} C_{\alpha/4}(A_i)$ is nearly $1$. Although it is not true, we assume that
    \begin{align}\label{uso}
     \mu \Big(\bigcup_{i=0}^{k-1} C_{\alpha/4}(A_i) \Big)=1
     \end{align}in order to tell the idea of the proof. Putting $A:=C_{\alpha/4}(A_0)$ and
     $B:=\bigcup_{i=1}^{k-1} C_{\alpha/4}(A_i)$, we have $X=A\cup B$,
     $A\cap B=\emptyset$, $\mu(A)\geq \kappa,\mu(B)\geq \kappa$, and $\dist(A,B)\geq \alpha/2$. 

     Let $(\mu_t)_{t\in[\,0,1\,]}$ be a geodesic from
   $\mu_A:=(1/\mu(A))\mu|_A$ to $\mu$ with respect to $\dist_2^W$. For
   sufficiently small $t>0$ we have $\dist(x,A)<\alpha/2 \leq
   \dist(A,B)$ for any $x\in \supp \mu_t$, which gives $\supp
   \mu_t\subseteq A$. This leads a contradiction since by (\ref{3s1cont}) we have
   \begin{align}\label{israel}
    \log \mu(A)\geq \log \mu(\supp \mu_t)\geq (1-t)\log \mu(A)+t\log \mu(X),
    \end{align}which implies $\log \mu(A)\geq 0$. Since (\ref{uso}) is
    always not true, we have an error term depending only on
    $\kappa$ in (\ref{israel}) and we need to consider the trade-off
    between the error term and $t$ to accomplish the above idea. This leads us to control separated
    subsets and estimate transport distances between them. In \cite{funa2}, in
    order to control separated subsets the
    author heavily relied on E.~Milman's theorem in \cite{emil3}(see
    \cite[Claim 3.5]{funa2}). His theorem is not known
    for singular metric spaces such as Alexandrov spaces. The key
    ingredient of his theorem relies on the regularity theory of
    isoperimetric minimizer. Below we will avoid using his theorem. From $A_i$ we
    will construct two subsets $A,B$ such that the transport distance
    between them is at most $c\dist(A,B)$. The union of $A,B$ does not
    necessarily have almost total measure.

 \begin{proof}[Proof of Theorem \ref{3t2}]
  It suffices to prove that there exist two universal
  numeric constants $c_0,\kappa_0>0$ such that
  \begin{align}\label{3s2}
   \sep ((X,\mu);\underbrace{\kappa,\kappa, \cdots,\kappa}_{k\text{ times}})\leq \frac{c_0}{D}\log \frac{1}{\kappa}
   \end{align}for any $\kappa\leq \kappa_0$. In fact, if $\kappa\geq
  1/2$, then the left-hand side of the above inequality is zero and there is nothing to prove. In the
  case where $\kappa_0< \kappa \leq 1/2$, by (\ref{3s2}) we have 
  \begin{align*}
   \sep((X, \mu);\underbrace{\kappa,\kappa, \cdots,\kappa}_{k\text{
   times}})\leq \ &
   \sep((X,\mu);\underbrace{\kappa_0,\kappa_0,\cdots,\kappa_0}_{k\text{
   times}})\\
   \leq \ &\frac{c_0\log
   \frac{1}{\kappa_0}}{D\log \frac{1}{\kappa}} \log
   \frac{1}{\kappa}\\ \leq \ & \frac{c_0\log
   \frac{1}{\kappa_0}}{D\log 2} \log \frac{1}{\kappa},
   \end{align*}which implies the conclusion of the theorem.

  Suppose the contrary to (\ref{3s2}), i.e.,
  \begin{align}\label{3s3}
   \sep((X,\mu);\underbrace{\kappa,\kappa, \cdots,\kappa}_{k\text{ times}}) > \frac{c_1}{D} \log \frac{1}{\kappa},
   \end{align}where $c_1>0$ is a sufficiently large universal numeric
  constant and $\kappa>0$ is a sufficiently small number. Both the largeness of $c_1$ and the
  smallness of $\kappa$ will be specified later. Note that the assumption (\ref{3s3}) immediately gives $k
  \kappa<1$ (otherwise, the left-hand side of (\ref{3s3}) is zero). We denote
  the right-hand side of (\ref{3s3}) by $\alpha$, i.e.,
  \begin{align*}
   \alpha:=\frac{c_1}{D}\log \frac{1}{\kappa}.
   \end{align*}

  The assumption (\ref{3s3}) implies the existence of $k$ Borel subsets
  $A_0,A_1,\cdots, A_{k-1}\subseteq X$ such that $\mu(A_i)\geq \kappa$
  for any $i$ and $\dist(A_i,A_j)> \alpha \text{ \ for any\ }i\neq j$.
   If $c_1$ is large enough, then after applying Lemma \ref{2.2l2} to the
  condition (\ref{3ss1}) we may assume that those $A_0,A_1, \cdots,
  A_{k-1}$ are compact subsets and satisfy
  $\mu(A_i)\geq \kappa$, $\dist(A_i,A_j)\geq \alpha/2$, and
  $\mu(\bigcup_{i=0}^{k-1}A_i)\geq 1-\kappa^8$. In fact, the
  assumption (\ref{3ss1}) yields
  \begin{align*}
   \sep((X,\mu);\underbrace{\kappa,\kappa, \cdots,\kappa}_{k\text{
   times}},\kappa^9)\leq \sep ((X,\mu);\underbrace{\kappa^9,\kappa^9, \cdots,\kappa^9}_{k+1\text{
   times}})\leq \frac{9}{D} \log \frac{1}{\kappa}.
   \end{align*}Thus whenever $c_1>36$ we get
  \begin{align*}\sep((X,\mu);\underbrace{\kappa,\kappa, \cdots,\kappa}_{k\text{
   times}},\kappa^9)<\alpha/4,
   \end{align*}which implies $\mu(\bigcup_{i=0}^{k-1}
  C_{\alpha/4}(A_i))\geq 1-\kappa^9$ by Lemma \ref{2.2l2}. Note that
  $\mu (C_{\alpha/4}(A_i))>\mu(A_i)\geq \kappa$ since $\mu$ has
  full support on $X$ . We can
  approximate $C_{\alpha/4}(A_i)$ by compact subsets $K_i\subseteq
  C_{\alpha/4}(A_i)$ so that $\mu(K_i)$ is as close as possible to
  $\mu(C_{\alpha/4}(A_i))$ (\cite[Theorems 1.1 and 1.3]{bil}). After
  taking a sufficient approximation $K_i$ we rechoose $A_i$ as $K_i$.

  For each $i$ we set $\mu_{A_i}:=(1/\mu(A_i))\mu|_{A_i}$. Given any $i,j$ we take a $1$-Lipschitz function $f_{ij}:X\to
  \mathbb{R}$ such that
  \begin{align*}
   |\lm(f_{ij};\mu_{A_i})-\lm(f_{ij};\mu_{A_j})|=\sup
   |\lm(f;\mu_{A_i})-\lm (f;\mu_{A_j})|,
   \end{align*}where the supremum runs over all $1$-Lipschitz functions
  $f:X\to \mathbb{R}$. We can take such $f_{ij}$ so that $f_{ij}=f_{ji}$.

  Put $a_i=a_i(\kappa):=\ler ((A_i,\mu_{A_i});-\kappa^8)$.
  \begin{claim}\label{madacl1}
   We have $a_i\leq \alpha/2$ provided that $c_1$ is large enough.
   \begin{proof}By Lemma \ref{medul1} we have $a_i\leq
    \sep(\mu_{A_i};\kappa^8/2,\kappa^8/2)$. We shall estimate the
    right-hand side. For any $B,C\subseteq A_i$ such that
    $\mu_{A_i}(B),\mu_{A_i}(C)\geq \kappa^8/2$ we have
    $\mu(B),\mu(C)\geq \mu(A_i)\kappa^8/2\geq \kappa^9/2$. If $c_1$ is
    large enough, then by the assumption (\ref{3ss1}) we get
    \begin{align*}
     \sep\Big((X,\mu);\underbrace{\kappa,\kappa, \cdots,\kappa}_{k-1\text{ times}},\frac{\kappa^9}{2},\frac{\kappa^9}{2}\Big)\leq \frac{1}{D}\log\frac{2}{\kappa^9}<\frac{\alpha}{2}.
     \end{align*}Since $\dist(A_j, A_k)\geq \alpha/2$ we hence obtain
    $\dist(B,C)<\alpha/2$, which implies the claim.
    \end{proof}
   \end{claim}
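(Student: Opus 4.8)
The plan is to reduce the bound on the L\'evy radius to a bound on a $2$-separation distance, and then to estimate that $2$-separation distance by padding the two competitor subsets sitting inside $A_i$ with the remaining $k-1$ subsets $A_j$, $j\ne i$, and invoking the hypothesis (\ref{3ss1}).

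First I would apply Lemma \ref{medul1} to the mm-space $(A_i,\mu_{A_i})$ with parameter $\kappa^8$, obtaining
\[
a_i=\ler\big((A_i,\mu_{A_i});-\kappa^8\big)\le \sep\big((A_i,\mu_{A_i});\kappa^8/2,\kappa^8/2\big).
\]
It therefore suffices to show that $\dist(B,C)<\alpha/2$ for every pair of Borel subsets $B,C\subseteq A_i$ with $\mu_{A_i}(B),\mu_{A_i}(C)\ge\kappa^8/2$; such a bound yields $\sep((A_i,\mu_{A_i});\kappa^8/2,\kappa^8/2)\le\alpha/2$ at once. Since the metric on $A_i$ is the one induced from $X$ and $\mu(A_i)\ge\kappa$, any such $B$ and $C$ also satisfy $\mu(B),\mu(C)\ge\kappa^9/2$ (measured in $X$), and $\dist(B,C)$ is the same whether computed in $A_i$ or in $X$.

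The key step is to form, using the hypothesis $k\ge 2$, the $(k+1)$-tuple of Borel subsets of $X$ consisting of $B$, $C$, and the $k-1$ subsets $A_j$ with $j\ne i$. Because $B,C\subseteq A_i$ and $\dist(A_i,A_j)\ge\alpha/2$ for every $j\ne i$, while $\dist(A_j,A_l)\ge\alpha/2$ for all distinct $j,l\ne i$, every pairwise distance within this tuple is $\ge\alpha/2$ with the single possible exception of $\dist(B,C)$. On the other hand, from $\mu(A_j)\ge\kappa$, $\mu(B),\mu(C)\ge\kappa^9/2$, the monotonicity of $\sep$, and (\ref{3ss1}) applied with parameter $\kappa^9/2$, the minimum of the pairwise distances of this tuple is at most
\[
\sep\Big((X,\mu);\underbrace{\kappa,\cdots,\kappa}_{k-1\text{ times}},\frac{\kappa^9}{2},\frac{\kappa^9}{2}\Big)\le\sep\Big((X,\mu);\underbrace{\frac{\kappa^9}{2},\cdots,\frac{\kappa^9}{2}}_{k+1\text{ times}}\Big)\le\frac1D\log\frac{2}{\kappa^9},
\]
and for small $\kappa$ the right-hand side is at most a fixed multiple of $\frac1D\log\frac1\kappa$, hence strictly less than $\alpha/2=\frac{c_1}{2D}\log\frac1\kappa$ once $c_1$ exceeds an absolute constant. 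Comparing this with the previous sentence forces $\dist(B,C)<\alpha/2$, as desired.

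I expect the only real obstacle to be bookkeeping: one must check that replacing the \emph{relative} threshold $\kappa^8/2$ on $A_i$ by the \emph{absolute} threshold $\kappa^9/2$ on $X$ inflates $\log\frac1\kappa$ by only a bounded factor, so that a single absolute choice of $c_1$ works uniformly in both $\kappa$ and $k$, and one should observe that it is precisely the assumption $k\ge2$ that furnishes the buffer subsets $A_j$ needed to build a genuine $(k+1)$-tuple. Everything else is a direct unwinding of the definitions of $\ler$ and $\sep$.
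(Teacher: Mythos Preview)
Your proposal is correct and follows essentially the same argument as the paper: reduce $a_i$ to a $2$-separation distance on $(A_i,\mu_{A_i})$ via Lemma \ref{medul1}, convert the relative threshold $\kappa^8/2$ to the absolute threshold $\kappa^9/2$, pad $B,C$ with the remaining $A_j$'s to form a $(k+1)$-tuple, and invoke (\ref{3ss1}) together with the $\alpha/2$ separation of the $A_j$'s to force $\dist(B,C)<\alpha/2$. Your write-up is in fact somewhat more explicit than the paper's about the padding step and the monotonicity of $\sep$.
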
Setting
  \begin{align*}
   A_i':= \bigcap_{j=0}^{k-1}\{ x\in A_i \mid |f_{ij}(x)-\lm(f_{ij};\mu_{A_i})|\leq a_i\},
   \end{align*}we have $\mu_{A_i}(A_i')\geq 1-\kappa^8 k$ by the definition of
  L\'evy radius. Recalling that $\kappa<1/k$ we get
  $\mu_{A_i}(A_i')\geq 1-\kappa^7$. We also get
  $\mu(\bigcup_{i=0}^{k-1}A_i')=
  \sum_{i=0}^{k-1}\mu(A_i)\mu_{A_i}(A_i')\geq 1-\kappa^8 -\kappa^7\geq
  1-\kappa^6$ provided that $\kappa$ is small enough. 

 Take $x_i\in A_i'$ and $x_j\in A_j'$ such that
  $\dist(x_i,x_j)=\dist(A_i',A_j')$. Claim \ref{madacl1} yields that for any $1$-Lipschitz
  functions $f:X\to \mathbb{R}$ we have
  \begin{align}\label{madala1}
   |\lm(f;\mu_{A_i})-\lm(f;\mu_{A_j})|\leq \ &
   |\lm(f_{ij};\mu_{A_i})-\lm(f_{ij};\mu_{A_j})|\\
   \leq \ & a_i + a_j + |f_{ij}(x_i)-f_{ij}(x_j)|\tag*{}\\
   \leq \ & a_i+ a_j +\dist(A_i',A_j')\tag*{}\\
   \leq \ & 3\dist(A_i',A_j').\tag*{}
   \end{align}


  Without loss of generality one may assume that
  $\dist(A_0',A_1')=\min_{i\neq j} \dist(A_i',A_j')$ and $\mu(A_0')\leq
  \mu(A_1')$. Set $B_0:=A_0'$ and $B_1:=A_0'\cup A_1'$.
  \begin{claim}\label{3cl2}There exists a coupling $\pi$ of $\mu_{B_0}$ and $\mu_{B_1}$ such that
   \begin{align*}
    \pi (\{ (x,y)\in X\times X \mid \dist(x,y)\leq 8 \dist(A_0',A_1') \})\geq 1-\kappa^6.
    \end{align*}
   \begin{proof}Put $\delta:=8\dist(A_0',A_1')$. It suffices to prove
    that $\di_{\kappa^6/\delta}(\mu_{B_0},\mu_{B_1})\leq
    \delta$ according to Theorem \ref{2.3t3}.  In fact, Theorem \ref{2.3t3} gives that there exists a $\delta$-transportation $\pi_0$ from
  $\mu_{B_0}$ to $\mu_{B_1}$ such that $\mushi \pi_0 \leq \kappa^6$. If $\mushi \pi_0 =0$, then we set $\pi:=\pi_0$. If $\mushi
  \pi_0 >0$, then set
  \begin{align*}
   \pi:= \pi_0+\frac{1}{\mushi \pi_0 } (\mu_{B_0} - (\pr_1)_{\ast} \pi_0) \times
   (\mu_{B_1}- (\pr_2)_{\ast}\pi_0 ).
   \end{align*}It is easy to check that $\pi$ fulfills the desired
  property.

    Given a Borel subset $A\subseteq B_1$ we shall prove that
    \begin{align}\label{str1}
     \mu_{B_1}(C_{\delta}(A))\geq \mu_{B_0}(A)-\kappa^6.
     \end{align}As we remarked just after Definition \ref{2.3d1}, this
    implies the other inequality $\mu_{B_0}(C_{\delta}(A))\geq
    \mu_{B_1}(A)-\kappa^6$ and hence $\di_{\kappa^6/\delta}(\mu_{B_0},\mu_{B_1})\leq
    \delta$.

    To prove (\ref{str1}) we may assume that $\mu_{B_0}(A)\geq \kappa^6$. Define $f:X\to \mathbb{R}$ by $f(x):=\dist(x,C_{\delta/2}(A)\cap
    A_0')$. Note that
    \begin{align}\label{fuku1}
     \mu_{A_0}(A)\geq (\mu(B_0)/\mu(A_0)) \mu_{B_0}(A)\geq (1-\kappa^7)\kappa^6.
     \end{align}As we showed in the proof of Claim \ref{madacl1} we get
    \begin{align*}
     \sep(\mu_{A_0};\kappa^6(1-\kappa^7),\kappa^7)\leq
     \sep(\mu_{A_0};\kappa^7,\kappa^7)<\alpha< \delta/2,
     \end{align*}which gives $\mu_{A_0}(C_{\delta/2}(A))\geq 1-\kappa^7$
    by (\ref{fuku1}).
Since $\mu_{A_0}(C_{\delta/2}(A)\cap A_0')\geq
    \mu_{A_0}(C_{\delta/2}(A))-\kappa^7\geq 1-2\kappa^7> 1/2$ for
    sufficiently small $\kappa$ we have $\lm
    (f;\mu_{A_0})=0$. Using (\ref{madala1}) we then obtain
    \begin{align*}
     \lm (f;\mu_{A_1})\leq 3\dist(A_0',A_1').
     \end{align*}

    Put $B:= \{ x\in X \mid \dist(x,C_{\delta/2}(A)\cap A_0')\leq
     \delta/2 \}$. Then
    \begin{align*}
     \mu_{A_1}(B)\geq \ &\mu_{A_1}(\{ x\in X \mid f(x)\leq
     3\dist(A_0',A_1')+a_1\})\\
     \geq \ & \mu_{A_1}(\{ x\in X \mid
     |f(x)-\lm(f;\mu_{A_1})|\leq a_1\})
     \geq  1-\kappa^8,
     \end{align*}which shows
    \begin{align*}
    \mu(B\cap A_1')\geq \mu(B\cap A_1)-\mu(A_1\setminus A_1')\geq
     (1-\kappa^8)\mu(A_1)-\kappa^7 \mu(A_1)\geq
     (1-2\kappa^7)\mu(A_1).
     \end{align*}Combining this inequality with $\mu (B\cap A_0' )\geq
    \mu(C_{\delta}(A)\cap A_0')\geq (1-2\kappa^7)\mu(A_0)$ we obtain
\begin{align*}
    \mu_{B_1}(B)\geq  \mu(B_1)^{-1}\{ \mu(A_0')(1-2\kappa^7)+\mu(A_1')(1-2\kappa^7)
 \} \geq  1-4\kappa^7
 \geq  \mu_{B_0}(A)-\kappa^6 
 \end{align*}provided that $\kappa$ is sufficiently small. This
    completes the proof of the claim.
    \end{proof}
   \end{claim}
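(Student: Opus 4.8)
The plan is to deduce the existence of the coupling from Strassen's theorem (Theorem \ref{2.3t3}), so that the whole matter reduces to a one-sided Prohorov estimate. Write $\delta:=8\dist(A_0',A_1')$. First I would show that it suffices to prove
\[
  \di_{\kappa^6/\delta}(\mu_{B_0},\mu_{B_1})\leq \delta .
\]
Granting this, Theorem \ref{2.3t3} produces a $\delta$-transportation $\pi_0$ from $\mu_{B_0}$ to $\mu_{B_1}$ with $\mushi\pi_0\leq\kappa^6$; if $\mushi\pi_0=0$ we are done, and otherwise we complete $\pi_0$ to an honest coupling by adding the renormalized product of the two residual marginals,
\[
  \pi:=\pi_0+\frac{1}{\mushi\pi_0}\,\bigl(\mu_{B_0}-(\pr_1)_{\ast}\pi_0\bigr)\times\bigl(\mu_{B_1}-(\pr_2)_{\ast}\pi_0\bigr).
\]
A direct check gives $(\pr_i)_{\ast}\pi=\mu_{B_{i-1}}$, and since the added part has total mass $\mushi\pi_0\leq\kappa^6$, the measure $\pi$ still puts at least $1-\kappa^6$ on $\{\dist\leq\delta\}$. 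By the remark following Definition \ref{2.3d1}, the Prohorov bound in turn follows as soon as we verify the single inequality $\mu_{B_1}(C_\delta(A))\geq\mu_{B_0}(A)-\kappa^6$ for every Borel $A\subseteq X$.

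For that inequality I would reduce to $A\subseteq B_0=A_0'$ with $\mu_{B_0}(A)\geq\kappa^6$ (otherwise there is nothing to prove) and then run the concentration machinery through the $1$-Lipschitz function $f(x):=\dist\bigl(x,C_{\delta/2}(A)\cap A_0'\bigr)$. The heart of the matter is a three-step median argument: (i) because $A$ has $\mu_{A_0}$-measure at least a fixed power of $\kappa$ and, exactly as in the proof of Claim \ref{madacl1}, $\sep(\mu_{A_0};\kappa^7,\kappa^7)<\alpha<\delta/2$, the set $C_{\delta/2}(A)\cap A_0'$ fills more than half of $\mu_{A_0}$, so $\lm(f;\mu_{A_0})=0$; (ii) the median-comparison inequality (\ref{madala1}) then gives $\lm(f;\mu_{A_1})\leq 3\dist(A_0',A_1')$; (iii) the L\'evy-radius bound $a_1\leq\alpha/2$ supplied by Claim \ref{madacl1} concentrates $\mu_{A_1}$ about this median, so all but a $\kappa^8$-fraction of $\mu_{A_1}$ satisfies $f\leq 3\dist(A_0',A_1')+a_1\leq\delta/2$, hence lies in $C_\delta(A)$. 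Transferring this back through the normalizations relating $\mu$, $\mu_{A_i}$ and $\mu_{B_i}$, and discarding the sets $A_i\setminus A_i'$ (each of $\mu$-measure $\leq\kappa^7\mu(A_i)$), yields $\mu_{B_1}(C_\delta(A))\geq 1-O(\kappa^7)$, which beats $\mu_{B_0}(A)-\kappa^6$ once $\kappa$ is small.

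I expect the main obstacle to be not any individual inequality but the bookkeeping that holds the argument together: one must choose the powers $\kappa^6,\kappa^7,\kappa^8$ and the smallness threshold for $\kappa$ so that every appeal to a median really sees a set of measure $>1/2$, carry the constants coherently through the normalizations $\mu\mapsto\mu_{A_i}\mapsto\mu_{B_i}$, and use the normalizing assumptions $\dist(A_0',A_1')=\min_{i\neq j}\dist(A_i',A_j')$ and $\mu(A_0')\leq\mu(A_1')$ precisely where they are needed — they are what make the choice of $\delta$ and the direction $B_0\to B_1$ of the transportation legitimate. Once Claim \ref{madacl1}, the median-jump estimate (\ref{madala1}) and the hypothesis (\ref{3ss1}) are in hand, each of the three concentration steps is short.
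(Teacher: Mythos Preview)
Your proposal is correct and follows essentially the same route as the paper: reduce via Strassen's theorem to a one-sided Prohorov inequality, then establish that inequality through the three-step median argument using $f(x)=\dist(x,C_{\delta/2}(A)\cap A_0')$, Claim~\ref{madacl1}, and the median-jump estimate (\ref{madala1}). One minor remark: the normalizing assumptions $\dist(A_0',A_1')=\min_{i\neq j}\dist(A_i',A_j')$ and $\mu(A_0')\leq\mu(A_1')$ are not actually needed inside the proof of this claim --- they come into play only later in the proof of Theorem~\ref{3t2}.
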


We set $\Delta:= \{ (x,y)\in X\times X  \mid
   \dist(x,y)\leq 8 \dist(A_0',A_1')  \}$. We consider two Borel probability measures $\mu_{0}:=a
  (\pr_1)_{\ast}(\pi |_{\Delta})$ and $\mu_{1}:=a
  (\pr_2)_{\ast}(\pi |_{\Delta})$, where $a:=
  \pi(\Delta)^{-1}$. By Claim \ref{3cl2} we have
  \begin{align}\label{3s11}
   1\leq a \leq 1/(1-\kappa^6)
   \end{align}and
  \begin{align}\label{3s12}
   \dist_2^W(\mu_{0},\mu_{1})^2 \leq a  \int_{X \times
   X }\dist (x,y)^2 d \pi |_{\Delta}(x,y) \leq 8^2
   \dist(B_0,B_1\setminus B_0)^2.
   \end{align}Take an optimal dynamical transference
  plan $\Pi$ such that $(e_i)_{\ast}\Pi =\mu_i$ for each
  $i=0,1$. Putting $r:=\dist (B_0,B_1\setminus
  B_0)$,
  we consider the set
  \begin{align*}
   \Gamma_t := \{   \gamma \in \supp \Pi  \mid \dist(e_0(\gamma),
   e_t(\gamma))\leq r/2                        \}.
   \end{align*}By (\ref{3s12}) we have
  \begin{align*}
   (r^2/4)\Pi (\Gamma\setminus \Gamma_t)\leq
   \dist_2^W((e_0)_{\ast}\Pi,(e_t)_{\ast}\Pi)^2=
   t^2\dist_2^W(\mu_0,\mu_1)^2 \leq \{8 t \dist(B_0,B_1\setminus B_0)\}^2,
   \end{align*}which yields
  \begin{align}\label{3s13}
   \Pi(\Gamma_t)\geq 1- c t^2,
   \end{align}where $c:=8^3$. For $s\in [\,0,1\,]$ we put $\nu_s := (e_s)_{\ast}
  \frac{\Pi|_{\Gamma_t}}{\Pi(\Gamma_t)}$. By the definition of $\nu_s$ we obtain the following. 
  \begin{claim}\label{3cl3} $\supp \nu_t
    \cap \bigcup_{i=0}^{k-1} A_i' \subseteq B_0$.
   \end{claim}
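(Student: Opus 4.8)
The plan is simply to unwind the definitions of $\mu_0$, $\Pi$, $\Gamma_t$ and $\nu_t$ and then invoke the mutual separation of the sets $A_i'$. The first step is to observe that $\mu_0$ is concentrated on $B_0=A_0'$. Indeed, since $\pi|_{\Delta}\leq \pi$ we have $(\pr_1)_{\ast}(\pi|_{\Delta})\leq (\pr_1)_{\ast}\pi=\mu_{B_0}$, and $\mu_{B_0}$ is supported on the compact set $B_0$; hence $\mu_0(X\setminus B_0)\leq a\,\mu_{B_0}(X\setminus B_0)=0$, so $\supp\mu_0\subseteq B_0$. Because $\Pi$ is a dynamical transference plan with $(e_0)_{\ast}\Pi=\mu_0$ and $e_0\colon\Gamma\to X$ is continuous, this gives $e_0(\supp\Pi)\subseteq\supp\mu_0\subseteq B_0$.

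The second step is to locate $\supp\nu_t$ near $B_0$, where $r:=\dist(B_0,B_1\setminus B_0)$. For $\gamma\in\Gamma_t$ we have $\gamma\in\supp\Pi$, so $e_0(\gamma)\in B_0$ by Step 1, while $\dist(e_0(\gamma),e_t(\gamma))\leq r/2$ by the definition of $\Gamma_t$; hence $e_t(\gamma)\in C_{r/2}(B_0)$. The set $\Gamma_t$ is closed, being the intersection of $\supp\Pi$ with the sublevel set of the continuous function $\gamma\mapsto\dist(e_0(\gamma),e_t(\gamma))$, so $\supp(\Pi|_{\Gamma_t})\subseteq\Gamma_t$; continuity of $e_t$ together with the fact that $C_{r/2}(B_0)$ is closed then yields $\supp\nu_t=\overline{e_t(\supp(\Pi|_{\Gamma_t}))}\subseteq\overline{e_t(\Gamma_t)}\subseteq C_{r/2}(B_0)$. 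Thus every point of $\supp\nu_t$ lies within distance $r/2$ of $A_0'$.

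The final step uses the separation of the $A_i'$. Since $A_0'$ and $A_1'$ are disjoint nonempty subsets of $A_0$ and $A_1$, we have $r=\dist(A_0',A_1')\geq\dist(A_0,A_1)\geq\alpha/2>0$, and by the choice made just before Claim \ref{3cl2} we have $r=\min_{i\neq j}\dist(A_i',A_j')$. Take $x\in\supp\nu_t$ lying in some $A_i'$ with $0\leq i\leq k-1$. If $i=0$ then $x\in A_0'=B_0$, as desired. If $i\neq 0$, then $\dist(x,B_0)=\dist(x,A_0')\geq\dist(A_i',A_0')\geq r$, which together with $\dist(x,B_0)\leq r/2$ from Step 2 forces $r\leq 0$, a contradiction. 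Hence $i=0$, so $\supp\nu_t\cap\bigcup_{i=0}^{k-1}A_i'\subseteq B_0$. There is no serious obstacle in this claim; the only points requiring a little care are the elementary topological facts that supports of push-forwards are closures of images, that $\Gamma_t$ and $C_{r/2}(B_0)$ are closed, and that $r>0$.
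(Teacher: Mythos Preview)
Your proof is correct and is exactly the unwinding of definitions that the paper alludes to when it says the claim follows ``by the definition of $\nu_s$''; the paper gives no further argument. The only point worth remarking is that $r=\dist(B_0,B_1\setminus B_0)=\dist(A_0',A_1')$ because $A_0'\cap A_1'=\emptyset$, which you use implicitly and which follows from $A_i'\subseteq A_i$ and $\dist(A_0,A_1)\geq\alpha/2>0$.
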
By using Claim \ref{3cl3}, we get
  \begin{align}\label{3s15}
   \log \mu (B_0) + \frac{\kappa^6}{\mu (B_0)}
   \geq \ & \log \mu (B_0)+ \log
   \Big(1+\frac{\kappa^6}{\mu (B_0)}\Big) \\
   = \ &\log (\mu (B_0) +\kappa^6) \tag*{}\\
   \geq \ &\log \Big\{\mu \Big(\supp \nu_t \cap \bigcup_{i=0}^{k-1}A_i'\Big) +
   \mu \Big(  \supp \nu_t      \setminus \bigcup_{i=0}^{k-1}A_i'\Big)\Big\}
   \tag*{}\\
   = \ & \log \mu (\supp \nu_t) \tag*{}
   \end{align}Note that $(\nu_s)_{s\in [\,0,1\,]}$ is a geodesic between
  $\nu_0$ and $\nu_1$. Since
  \begin{align}\label{3ss14} \nu_i= \frac{(e_i)_{\ast}\Pi|_{\Gamma_t}}{\Pi(\Gamma_t)} \leq
 \frac{(e_i)_{\ast}\Pi}{\Pi(\Gamma_t)} =  \frac{\mu_i}{\Pi(\Gamma_t)}\leq
   \frac{a}{\Pi(\Gamma_t)} (\pr_{i+1})_{\ast}\pi=
   \frac{a}{\Pi(\Gamma_t)}\mu_{B_i}
   \end{align}for $i=0,1$, each $\nu_i$ is absolutely continuous with
  respect to $\mu$, and especially the above geodesic $(\nu_s)_{s\in
  [\,0,1\,]}$ is unique. For each $i=0,1$, we write $d \nu_i = \rho_i d\mu$. By (\ref{3s1}), we get
  \begin{align}\label{3s14}
   &\log \mu (\supp \nu_t )  \geq   - (1- t)
   \int_{X} \rho_{0}\log \rho_{0} d\mu - t  \int_{X}
   \rho_{1} \log \rho_{1} d\mu.
   \end{align}For a subset $A\subseteq X$ we denote by $1_A$ the
  characteristic function of $A$, i.e., $1_A(x):=1$ if $x\in A$ and
  $1_A(x):=0$ if $x\in X \setminus A$.
\begin{claim}\label{3cl4}We have
 \begin{align*}
  \rho_{i}\log \rho_{i}\leq \frac{c_t
  1_{B_i}}{\mu (B_i)}\log  \frac{c_t
  1_{B_i}}{\mu(B_i)} \ \ (i=0,1), 
  \end{align*}where $c_t:= a/\Pi(\Gamma_t)$.
 \begin{proof}By (\ref{3ss14}) we have $\rho_{i}\leq (c_t
  /\mu(B_i))1_{B_i}$. Since
  $c_t\geq 1$ and $u \log u \leq v \log v$ for any two positive
  numbers $u,v$ such
  that $u\leq v$ and $v\geq 1$, we obtain the claim.
  \end{proof}
 \end{claim}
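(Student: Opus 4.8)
\textbf{Plan for proving Claim \ref{3cl4}.}
The claim is a pointwise inequality comparing the function $\rho_i\log\rho_i$ with the explicit function $(c_t1_{B_i}/\mu(B_i))\log(c_t1_{B_i}/\mu(B_i))$, where $c_t=a/\Pi(\Gamma_t)$. The plan is to reduce everything to a single elementary fact about the real function $u\mapsto u\log u$: it is monotone nondecreasing on the interval $[1,\infty)$, and more generally if $0\le u\le v$ with $v\ge 1$ then $u\log u\le v\log v$. This holds because on $[0,1/e]$ the function $u\log u$ is negative (hence $\le 0\le v\log v$ when $v\ge 1$), while on $[1/e,\infty)$ it is increasing. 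So the whole proof is: (i) produce the pointwise bound $\rho_i\le (c_t/\mu(B_i))1_{B_i}$; (ii) check $c_t\ge 1$; (iii) invoke the elementary monotonicity fact with $u=\rho_i(x)$ and $v=(c_t/\mu(B_i))1_{B_i}(x)$, distinguishing the two cases $x\in B_i$ and $x\notin B_i$.

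For step (i), I would extract the pointwise density bound directly from the measure inequality \eqref{3ss14}, which reads $\nu_i\le \frac{a}{\Pi(\Gamma_t)}\mu_{B_i}=c_t\,\mu_{B_i}$. Since $d\nu_i=\rho_i\,d\mu$ and $\mu_{B_i}=(1/\mu(B_i))\mu|_{B_i}$, the inequality $\nu_i(A)\le c_t\mu_{B_i}(A)$ for every Borel $A$ forces $\rho_i\le (c_t/\mu(B_i))1_{B_i}$ $\mu$-almost everywhere: testing on $A=\{\rho_i>(c_t/\mu(B_i))1_{B_i}+\varepsilon\}$ shows this set is $\mu$-null, and $A\subseteq X\setminus B_i$ would already give a contradiction there as well since $\nu_i(X\setminus B_i)=0$. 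Thus $\rho_i$ vanishes off $B_i$ and is bounded by $c_t/\mu(B_i)$ on $B_i$.

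For step (ii), $c_t=a/\Pi(\Gamma_t)$ with $a\ge 1$ by \eqref{3s11} and $\Pi(\Gamma_t)\le 1$ since $\Pi$ is a probability measure and $\Gamma_t\subseteq\Gamma$; hence $c_t\ge 1$. For step (iii): if $x\notin B_i$ then $\rho_i(x)=0$, so $\rho_i(x)\log\rho_i(x)=0$, while the right-hand side is also $0$ (the argument of the logarithm is then $c_t\cdot 0=0$, and we use the convention $0\log 0=0$ consistent with the entropy integral); if $x\in B_i$ then $0\le\rho_i(x)\le c_t/\mu(B_i)=v$ and $v\ge c_t\ge 1$ (using $\mu(B_i)\le 1$), so the elementary fact gives $\rho_i(x)\log\rho_i(x)\le v\log v=(c_t/\mu(B_i))\log(c_t/\mu(B_i))$, which is exactly the asserted bound evaluated at $x\in B_i$.

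There is essentially no obstacle here; the only point requiring a line of care is the passage from the measure-theoretic domination $\nu_i\le c_t\mu_{B_i}$ to the almost-everywhere density bound, and the harmless bookkeeping of the convention $0\log 0=0$ so that the inequality holds genuinely pointwise (it is this pointwise form that is needed in \eqref{3s14} to pass under the integral sign). Everything else is the one-variable calculus fact about $u\log u$, which the paper already states inline in the proof.
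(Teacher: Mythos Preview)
Your proposal is correct and follows precisely the paper's own route: derive the pointwise density bound $\rho_i\le (c_t/\mu(B_i))1_{B_i}$ from \eqref{3ss14}, observe $c_t\ge 1$, and apply the elementary fact that $u\log u\le v\log v$ whenever $0\le u\le v$ and $v\ge 1$. You have simply unpacked the details (the a.e.\ passage from measure domination to density domination, the case split $x\in B_i$ versus $x\notin B_i$, and the observation $\mu(B_i)\le 1$ ensuring $v\ge 1$) that the paper leaves implicit.
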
Combining Claim \ref{3cl4} with (\ref{3s15}) and (\ref{3s14}) we have
  \begin{align*}
   \log \mu (B_0) + \frac{\kappa^6}{\mu (B_0)} \geq \ & -(1-t) \int_{X}\frac{c_t
  1_{B_0}}{\mu(B_0)}\log  \frac{c_t
  1_{B_0}}{\mu (B_0)} d\mu -t \int_{X} \frac{c_t
  1_{B_1}}{\mu (B_1)}\log  \frac{c_t
  1_{B_1}}{\mu (B_1)} d\mu  \\
   =\ & -c_t \log c_t + c_t (1-t)\log
   \mu(B_0)  + c_t t \log \mu(B_1).
   \end{align*}Substituting $t:=\kappa^3$, we thereby obtain
  \begin{align}\label{3s16}
    \log (1/2) + 4\kappa^2 \geq \ &\log \frac{\mu(B_0)}{\mu(B_1)}+\frac{\kappa^6}{\kappa^3 \mu(B_0)} \\ \geq \ & -
   \frac{c_t}{\kappa^3}\log c_t + \frac{c_t -1
   }{\kappa^3} (1-\kappa^3)\log \mu (B_0 ) + (c_t-1 )\log \mu (B_1). \tag*{}
  \end{align}Using (\ref{3s11}) and (\ref{3s13}) we estimate each term on the
  right-side of the above inequalities as 
  \begin{align*}
   \frac{c_t\log c_t}{\kappa^3}  =\ &
   \frac{1}{(1-\kappa^6)(1-c\kappa^6)}\cdot \frac{-\log
   (1-\kappa^6)(1-c\kappa^6)}{\kappa^3} \leq
   \frac{2\kappa^3+2c\kappa^3}{(1-\kappa^6)(1-c\kappa^6)}
   \end{align*}
  \begin{align*}
       \Big|\frac{c_t -1 }{\kappa^3} \log \mu(B_0)\Big| \leq \ &\frac{a-\Pi(\Gamma_t)}{\kappa^3
   \Pi(\Gamma_t)}  \log \frac{2}{\kappa}\leq \frac{\kappa^3(
   \{1+c(1-\kappa^6)\}}{(1-\kappa^6)(1-c\kappa^6)}\log \frac{2}{\kappa}
   \end{align*}and
  \begin{align*}
   |(c_t-1)\log \mu(B_1)|\leq \frac{\kappa^6\{
   1+c(1-\kappa^6)\}}{(1-\kappa^6)(1-c\kappa^6)}\log \frac{1}{\kappa}.
   \end{align*}These estimates imply the right-side of the
  inequalities (\ref{3s16}) is close to zero for sufficiently small
  $\kappa>0$. Since the left-side of the inequality (\ref{3s16}) is about $\log(1/2)<0$ for
  sufficiently small $\kappa>0$,
  this is a contradiction. This
  completes the proof of the theorem.
  \end{proof}

\section{Conjecture}

We raise the following conjecture for eigenvalues of Laplacian.
 \begin{conj}\label{conjconj}If $(X,\mu)$ is an weighted compact finite-dimensional Alexandrov space of
  $\cd(0,\infty)$ and $k$ is a natural number, then we have
  \begin{align*}
   \lambda_{k+1}(X,\mu)\leq c \lambda_k(X,\mu)
   \end{align*}for some universal constant $c>0$.
  \end{conj}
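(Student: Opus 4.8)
Here is a possible approach to Conjecture \ref{conjconj}.

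We would squeeze both $\lambda_k(X,\mu)$ and $\lambda_{k+1}(X,\mu)$ between the two sides of a separation-distance estimate and check that the loss is only by a universal factor. Write $\sigma_m(\kappa):=\sep((X,\mu);\underbrace{\kappa,\dots,\kappa}_{m\text{ times}})$. Two of the three ingredients are already available. First, Chung--Grigor'yan--Yau's inequality (the case $l=k$ of Theorem \ref{Mthm}, applied to $\lambda_{k+1}(X,\mu)$ with $k+2$ subsets) gives a universal constant $c_0>0$ such that
\begin{align*}
 \sigma_{k+2}(\kappa)\le \frac{\sqrt{c_0}\,\log(1/\kappa)}{\sqrt{\lambda_{k+1}(X,\mu)}}\qquad\text{for all }\kappa>0 .
\end{align*}
Second, feeding this into Theorem \ref{3t2} with its index taken to be $k+1$ (legitimate because $k+1\ge 2$) produces a universal constant $c_1>0$ such that
\begin{align*}
 \sigma_{k+1}(\kappa)\le \frac{c_1\,\log(1/\kappa)}{\sqrt{\lambda_{k+1}(X,\mu)}}\qquad\text{for all }\kappa>0 .
\end{align*}

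The third, missing ingredient is a \emph{reverse} Chung--Grigor'yan--Yau estimate: a lower bound for $\lambda_k(X,\mu)$ in terms of separation distances, morally the statement that, under $\cd(0,\infty)$, if $X$ cannot be split into $k+1$ non-negligible far-apart pieces then $\lambda_k(X,\mu)$ must be large. Concretely, suppose one had a scale $\kappa_0=\kappa_0(k)\in(0,1/2)$ and a constant $c_2>0$ with
\begin{align*}
 \lambda_k(X,\mu)\ \ge\ \frac{c_2}{\sigma_{k+1}(\kappa_0)^2}.
\end{align*}
Evaluating the previous display at $\kappa=\kappa_0$ and substituting would then give $\lambda_k(X,\mu)\ge (c_2/c_1^2)\,(\log(1/\kappa_0))^{-2}\,\lambda_{k+1}(X,\mu)$. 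For $k=1$ this is already enough: the needed reverse bound for $\sigma_2$ is, via Lemma \ref{medul1}, the assertion that an exponential concentration estimate forces a lower bound on $\lambda_1(X,\mu)$, which holds under $\cd(0,\infty)$ by a Buser--Ledoux/E.~Milman-type argument --- and in any case the case $k=1$ of the conjecture, $\lambda_2(X,\mu)\le c\,\lambda_1(X,\mu)$, is the case $k=2$ of Liu's inequality $\lambda_k(X,\mu)\le c k^2\lambda_1(X,\mu)$ \cite{Liu}; moreover $\kappa_0$ can be chosen to be a universal constant there, so no logarithm survives. For general $k$ one would run the ``$(k+1)$-way'' version: convert $k+1$ sets witnessing $\sigma_{k+1}$ into a partition of $X$ with controlled interface measure, and invoke a multi-way Buser--Ledoux inequality of the shape $\lambda_k(X,\mu)\gtrsim h_{k+1}^2$, valid under $\cd(0,\infty)$, whose components are available through analysis on Alexandrov spaces \cite{Kuwae}, as used by Liu \cite{Liu}.

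The real obstacle --- and, we suspect, the reason the statement is still only a conjecture --- is the logarithm that persists for large $k$. Any reverse estimate of the above type has to be used at a scale $\kappa_0\lesssim 1/k$: on a long thin flat cylinder $S^1_\varepsilon\times[\,0,1\,]$, which satisfies $\cd(0,\infty)$, the separation distance $\sigma_{k+1}(\kappa)$ is of order $1/k$ for small $\kappa$ but vanishes as soon as $\kappa>1/(k+1)$, so $\kappa_0$ cannot be a universal constant. Hence $\log(1/\kappa_0)$ is of order $\log k$, and the argument above yields only
\begin{align*}
 \lambda_{k+1}(X,\mu)\ \le\ c\,(\log k)^2\,\lambda_k(X,\mu),
\end{align*}
which --- while it would already improve the bound $\lambda_{k+1}(X,\mu)\le c\,(k+1)^2\lambda_k(X,\mu)$ that follows at once from Liu's inequality --- falls short of a universal constant; iterating Theorem \ref{3t2} all the way down merely recovers the exponential bound $\lambda_{k+1}(X,\mu)\le c^k\lambda_1(X,\mu)$ of \cite{funa2}. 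Removing the $k$-dependence entirely therefore seems to require abandoning separation distances in favour of a sharp, dimension- and $k$-free isoperimetric or concentration comparison for $\cd(0,\infty)$ spaces that controls the ratio $\lambda_{k+1}/\lambda_k$ directly --- and, over Alexandrov spaces, one must do this without the regularity theory for isoperimetric minimizers that underpins E.~Milman's theorem in the smooth category (cf.\ the discussion preceding the proof of Theorem \ref{3t2}). The decisive step, once such a $k$-independent reverse multi-way Cheeger/Buser--Ledoux inequality is available, would be exactly that inequality; with it in hand the sandwich above closes immediately.
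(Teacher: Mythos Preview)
The statement is a \emph{conjecture}; the paper does not prove it, and your proposal --- to its credit --- does not pretend to either. There is therefore no ``paper's own proof'' to compare against, only the paper's discussion in Section~4, and your write-up tracks that discussion closely: both reduce Conjecture~\ref{conjconj} to a missing \emph{reverse} estimate, namely a higher-$k$ analogue of E.~Milman's or Gozlan--Roberto--Samson's theorem asserting that a logarithmic $(k{+}1)$-separation inequality forces $\lambda_k(X,\mu)\gtrsim D^2$. With that in hand, Chung--Grigor'yan--Yau plus Theorem~\ref{3t2} would close the sandwich. For $k=1$ both you and the paper note that the reverse step is already available (via E.~Milman/GRS, or, as you add, directly from Liu's inequality).

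Where you go beyond the paper is the heuristic that any such reverse inequality would have to be invoked at scale $\kappa_0\lesssim 1/k$, so that this route could at best yield $\lambda_{k+1}\le c(\log k)^2\lambda_k$ rather than a universal constant. The cylinder example is a fair indication that $\kappa_0$ cannot be taken universal, but be aware that this entire $(\log k)^2$ conclusion is still conditional on the existence of a multi-way Buser--Ledoux bound $\lambda_k\gtrsim h_{k+1}^2$ (or your displayed $\lambda_k\ge c_2/\sigma_{k+1}(\kappa_0)^2$) with a \emph{$k$-independent} constant $c_2$ under $\cd(0,\infty)$ --- which is precisely the open ingredient. So the gap you identify is genuine and matches the paper's; your extra analysis is a reasonable heuristic sharpening of why the separation-distance route alone seems insufficient, not an independent argument.
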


  The answer is positive for any compact Riemannian homogeneous manifolds
  (\cite{chengyang}, \cite{li}).

  To explain how Theorem \ref{Mthm} relates with the above conjecture we
    need to explain some basics on the theory of concentration of measure
    in the sense of L\'evy and V.~Milman (\cite{levy}, \cite{mil2}). Refer to \cite{ledoux} for details.


We denote the open $r$-neighborhood of a subset $A$ in a metric space by $O_r(A)$.
\begin{dfn}[{Concentration function,~\cite{amil}}]\label{intdi}
  Let $X$ be an mm-space. For $r>0$ we define the real number $\alpha_X(r)$ as the
  supremum of $\mu_X ( X\setminus  O_r(A) )$, where $A$ runs over all
  Borel subsets of $X$ such that $\mu_X(A)\geq 1/2$. The function
  $\alpha_X:(\,0,+\infty\,)\to \mathbb{R}$ is called
  the \emph{concentration function}.
\end{dfn}


The following lemma asserts that exponential concentration
    inequalities and logarithmic $1$-separation inequalities are equivalent:

\begin{lem}\label{2.2l3}Let $X$ be an mm-space. 
 \begin{enumerate}
            \item If $X$ satisfies
                  \begin{align}\label{2.2s1}
                   \sep (X;\kappa,\kappa)\leq \frac{1}{C}\log \frac{c}{\kappa}\end{align}for any $\kappa>0$,
                  then we have $\alpha_X(r)\leq c \exp (-C r)$ for
                  any $r>0$.
           \item Conversely, if $X$ satisfies $\alpha_X(r)\leq c'\exp(-C' r)$ for any
                 $r>0$, then we have
                 \begin{align*}
                  \sep (X;\kappa,\kappa )\leq \frac{2}{C'}\log \frac{c'}{\kappa}
                  \end{align*}for any $\kappa>0$.
 \end{enumerate}
 \begin{proof}(1) Assume that $X$ satisfies (\ref{2.2s1}) and let $A\subseteq
  X$ be a Borel subset such that $\mu_X(A)\geq 1/2$. For $r>0$ we put
  $\kappa:=\mu_X(X\setminus O_r(A))$. Since
  \begin{align*}
   r\leq \dist(X\setminus O_r(A), A)\leq \sep(X;\kappa,1/2)\leq
   \sep(X;\kappa,\kappa)\leq \frac{1}{C}\log \frac{c}{\kappa},
   \end{align*}we have $\kappa\leq c\exp (-Cr)$, which gives the
  conclusion of (1).

  (2) Assuming that $\alpha_X(r)\leq c'\exp(-C'r)$, we take two Borel subsets
  $A,B\subseteq X$ such that $\mu_X(A)\geq \kappa$, $\mu_X(B)\geq
  \kappa$, and $\dist(A,B)=\sep(X;\kappa,\kappa)$. Let $\tilde{r}$ be
  any positive number satisfying
  \begin{align*}
  \alpha_X(\tilde{r})\leq c'\exp(-C'\tilde{r})<\kappa,
   \end{align*}i.e.,
  \begin{align*}
   \tilde{r}>\frac{1}{C'}\log \frac{c'}{\kappa}.
   \end{align*}Since $\mu_X(A)\geq \kappa$, by \cite[Lemma 1.1]{ledoux}, we have
  \begin{align*}
   1-\mu_X(O_{2\tilde{r}}(A))\leq \alpha_X(\tilde{r})<\kappa.
   \end{align*}Hence we have
  \begin{align*}
   \mu_X(O_{2\tilde{r}}(A)\cap B)> (1-\kappa)+\kappa - 1=0,
   \end{align*}which yields $\sep(X;\kappa,\kappa)=\dist(A,B)\leq
  2\tilde{r}$. Letting $\tilde{r}\to C'^{-1}\log (c'/\kappa)$ we obtain (2).
  \end{proof}
 \end{lem}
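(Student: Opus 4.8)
The plan is to prove the two implications separately, unwinding the definitions of $\sep$ and $\alpha_X$; nothing is needed beyond the monotonicity of the separation distance and the standard enlargement estimate for the concentration function.

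For (1), fix a Borel set $A\subseteq X$ with $\mu_X(A)\geq 1/2$ and a radius $r>0$, and set $\kappa:=\mu_X(X\setminus O_r(A))$. Since $A$ and $X\setminus O_r(A)$ are disjoint we have $\kappa\leq 1/2$, and since every point of $X\setminus O_r(A)$ lies at distance $\geq r$ from $A$,
\begin{align*}
 r\leq \dist\bigl(X\setminus O_r(A),\,A\bigr)\leq \sep(X;\kappa,1/2)\leq \sep(X;\kappa,\kappa)\leq \frac{1}{C}\log\frac{c}{\kappa},
\end{align*}
where the second inequality is the definition of the separation distance, the third is its monotonicity in the volume parameters recorded after Definition \ref{2.2d1} (applicable since $\kappa\leq 1/2$), and the last is the hypothesis (\ref{2.2s1}). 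Exponentiating gives $\kappa\leq c\exp(-Cr)$, and taking the supremum over all such $A$ yields $\alpha_X(r)\leq c\exp(-Cr)$; the case $\mu_X(X\setminus O_r(A))=0$ is trivial.

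For (2), fix $\kappa>0$ and let $A,B$ be arbitrary Borel subsets with $\mu_X(A),\mu_X(B)\geq\kappa$. Choose any $\tilde r>0$ with $\alpha_X(\tilde r)<\kappa$; by the hypothesis $\alpha_X(\tilde r)\leq c'\exp(-C'\tilde r)$ this is guaranteed once $\tilde r>C'^{-1}\log(c'/\kappa)$. The essential input is the enlargement estimate \cite[Lemma 1.1]{ledoux}: from $\mu_X(A)\geq\kappa>\alpha_X(\tilde r)$ it follows that $\mu_X\bigl(X\setminus O_{2\tilde r}(A)\bigr)\leq\alpha_X(\tilde r)<\kappa$ (one applies the definition of $\alpha_X$ to whichever of $O_{\tilde r}(A)$ or its complement has measure $\geq 1/2$; the second alternative is impossible because it would force $\mu_X(A)\leq\alpha_X(\tilde r)<\kappa$). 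Consequently
\begin{align*}
 \mu_X\bigl(O_{2\tilde r}(A)\cap B\bigr)\geq \mu_X\bigl(O_{2\tilde r}(A)\bigr)+\mu_X(B)-1>(1-\kappa)+\kappa-1=0,
\end{align*}
so $O_{2\tilde r}(A)$ meets $B$ and hence $\dist(A,B)\leq 2\tilde r$. Since $A,B$ were arbitrary this gives $\sep(X;\kappa,\kappa)\leq 2\tilde r$, and letting $\tilde r\downarrow C'^{-1}\log(c'/\kappa)$ yields $\sep(X;\kappa,\kappa)\leq \frac{2}{C'}\log\frac{c'}{\kappa}$.

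Both arguments are elementary; the one place that needs care is the asymmetric enlargement estimate in (2), i.e.\ upgrading a lower volume bound $\mu_X(A)\geq\kappa$ (rather than $\geq 1/2$) into control of $X\setminus O_{2\tilde r}(A)$ by $\alpha_X(\tilde r)$ — this is exactly where the hypothesis $\alpha_X(\tilde r)<\kappa$ is used. The usual remarks about whether the relevant suprema are attained are absorbed by an $\varepsilon$-of-room argument and present no real difficulty.
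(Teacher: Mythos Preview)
Your proof is correct and follows essentially the same route as the paper's: for (1) you set $\kappa:=\mu_X(X\setminus O_r(A))$ and chain $r\leq\dist(X\setminus O_r(A),A)\leq\sep(X;\kappa,\kappa)$, and for (2) you invoke the enlargement estimate \cite[Lemma~1.1]{ledoux} to force $O_{2\tilde r}(A)\cap B\neq\emptyset$. If anything you are slightly more careful than the paper---you make explicit that $\kappa\leq 1/2$ (needed for the monotonicity step in (1)) and you work with arbitrary $A,B$ rather than assuming the supremum defining $\sep$ is attained.
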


In the series of works \cite{emil2,emil3,emil1}, E.~Milman proved that a uniform tail-decay of the
concentration function implies the linear isoperimetric inequality
(Cheeger's isoperimetric inequality) under assuming the non-negativity of Bakry-\'Emery
Ricci curvature. Note that the linear isoperimetric inequality always
implies an appropriate Poincar\'e inequality and thus a lower bound for
the first eigenvalue of the weighted Laplacian. The key ingredient of E.~Milman's approach to the above result is the
 concavity of isoperimetric profile under the assumption of the non-negativity of
 Bakry-\'Emery Ricci
 curvature, the fact based on the regularity theory of isoperimetric
 minimizers (see \cite[Appendix]{emil2}). See also \cite{ledoux} for the
 heat semigroup approach. In \cite{goz} Gozlan, Roberto, and Samson proved
 that any exponential concentration inequalities imply appropriate Poincar\'e inequalities
 under assuming $\cd(0,\infty)$. In other words if an mm-space $X$
 satisfying $\cd(0,\infty)$ enjoys a logarithmic $1$-separation inequality $\sep
 (X;\kappa,\kappa)\leq (1/D)\log (1/\kappa)$ then we have
 $\sqrt{\lambda_1(X)}\geq c D$, where $c$ is some universal constant and
 $\lambda_1(X)$ is the spectral gap. Especially combining Theorem \ref{Mthm} for
 $k=2$ and $l=1$ with this theorem yields the positive answer to
 Conjecture \ref{conjconj} for $k=1$. In general according to Theorem
 \ref{Mthm} in order to give an affirmative answer to Conjecture
 \ref{conjconj} it suffices to extend E.~Milman's theorem or
 more weakly Gozlan-Roberto-Samson's theorem in terms of
 $\lambda_k(X,\mu)$, i.e., any logarithmic $k$-separation inequalities
 imply appropriate estimates of the $k$-th eigenvalue $\lambda_k(X,\mu)$
 from below under assuming $\cd (0,\infty)$.

\bigbreak
\noindent
\bigbreak
\noindent
{\it Acknowledgments.}

The author would like to thank to Professor Kazuhiro Kuwae for
explaining me analysis of Alexandrov spaces and answering my several questions. The author is also grateful to
an anonymous referee for carefully reading this paper and giving me helpful suggestions.

\end{document}